\def\namedlabel#1#2{\begingroup
    #2%
    \def\@currentlabel{#2}%
    \phantomsection\label{#1}\endgroup
}
\newtheorem{theorem}{Theorem}[section]
\newtheorem{lemma}[theorem]{Lemma}
\newtheorem{corollary}[theorem]{Corollary}
\theoremstyle{definition}
\newtheorem{assumption}{Assumption}[section]
\newtheorem{definition}[theorem]{Definition}
\newtheorem{example}[theorem]{Example}
\newtheorem{remark}[theorem]{Remark}
\newcommand{\E}{{\mathbb{E}}}
\newcommand{\N}{{\mathbb{N}}}
\renewcommand{\P}{{\mathbb{P}}}
\newcommand{\Q}{{\mathbb{Q}}}
\newcommand{\R}{{\mathbb{R}}}
\newcommand{\diff}{\mathop{}\!\mathrm{d}}
\renewcommand{\P}{{\mathbb P}}
\newcommand{\D}{{\mathbb{D}}}
\newcommand{\B}{{\mathcal{B}}}
\title{A Skorohod measurable universal functional representation of solutions to semimartingale SDEs}
\author{Pawe{\l} Przyby{\l}owicz\thanks{P. Przyby\l owicz is supported by the National Science Centre, Poland, under project 2017/25/B/ST1/00945.} \and Verena Schwarz \and Alexander Steinicke \and Michaela Sz\"olgyenyi\thanks{V. Schwarz and M. Sz\"olgyenyi are supported by the Austrian Science Fund (FWF): DOC 78.} }
\date{Corrected Version, March 2025}
\begin{document}

\maketitle

\begin{abstract}
In this paper we show the existence of a universal Skorohod measurable functional representation for a large class of semimartingale-driven stochastic differential equations. For this we prove that paths of the strong solutions of stochastic differential equations can be written as measurable functions of the paths of their driving processes into the space of all càdlàg functions equipped with the Borel sigma-field generated by all open sets with respect to the Skorohod metric.
This result can be applied to calculate Malliavin derivatives for SDEs driven by pure-jump Lévy processes with drift.\\

\noindent Keywords: semimartingale-driven stochastic differential equations, functional representation, Skorohod measurability, Malliavin derivative
\newline\newline
Mathematics Subject Classification (2020): 60H10, 60H07, 60G51
\end{abstract}

\section{Introduction}\label{sec:intro}

In this paper we prove that the universal functional representation for solutions to general semimartingale SDEs is Skorohod measurable.
This result has various applications.
It can be used in Malliavin calculus for pure-jump L\'evy processes, see Theorem \ref{ThmMalCalc} below; there our result yields a universal expression for the Malliavin derivative of an SDE solution, providing access to hedging strategies for price processes of financial assets.
Furthermore, functional representations of SDEs have applications in computational stochastics. In the jump-free case the respective classical result has been applied for example in \cite{muellergronbach2019b, MY20}. In these articles, upper and lower bounds for approximation schemes for Brownian-motion driven SDEs are proven. Our result allows us to perform complexity analysis for jump-diffusion SDEs with irregular coefficients; a first work in this direction is \cite{PSS22}.

The class of SDEs we consider is rather general and therefore covers a wide range of applications.
In particular, we consider SDEs of the form
\begin{equation}
\begin{aligned}\label{SDE}
X_t = H_t + \int_0^t g(s,G,X) \diff Y_s, \quad t\in[0,T]
\end{aligned}
\end{equation}
where $m$, $d$, $r\in\N$, $T\in(0,\infty)$, $Y$ is an $\R^m$-valued càdlàg semimartingale, $H$ is an $\R^d$-valued càdlàg adapted process, and $G$ is an $\R^r$-valued càdlàg adapted process on the filtered probability space $(\Omega,\mathcal{F},(\mathcal{F}_t)_{t\in[0,T]},\P)$ that satisfies the usual conditions. Let for all $n\in\N$, $\D_n = \{f\colon[0,T]\to \R^n: f \text{ is càdlàg}\}$. For the functions $g$ we first define the mapping $f\colon [0,T]\times\D_r\times \D_d \to L(\R^m,\R^d)$. We assume that the mapping $t\mapsto f(t,\zeta,\gamma) $ is càdlàg for all $(\zeta,\gamma)\in\D_r\times\D_d$ and define $g\colon [0,T]\times\D_r\times \D_d \to L(\R^m,\R^d)$ for all $t\in(0,T]$, $(\zeta,\gamma)\in\D_r\times\D_d$ as
\begin{equation}
\begin{aligned}\label{defG}
g(t,\zeta,\gamma) = f(t-,\zeta,\gamma)
\end{aligned}
\end{equation} 
and for all $(\zeta,\gamma)\in\D_r\times\D_d$, $g(0,\zeta,\gamma) = f(0,\zeta,\gamma)$.

The main result in this paper is the existence of a Skorohod measurable function $\Psi$ (only depending on $g$) such that for all $(H,G,Y)$, $X=\Psi(H,G,Y)$. 
This is more general than what could be obtained by the factorization lemma. The factorization lemma, see, e.g.,\cite[Lemma II.11.7]{Bauer2011} and \cite[Corollary 1.97]{klenke2014}, which can also be applied for processes on augmented filtrations, see \cite[Theorem 3.4]{Steinicke2016}, yields that for all $(H,G,Y)$ there exists a function $\Psi=\Psi_{H,G,Y,}$ such that $X=\Psi_{H,G,Y}(H,G,Y)$. In this sense our functional representation is universal.

This universality carries over to the application mentioned above in that the functional representation directly yields a
universal expression for Malliavin derivatives
of solutions to pure-jump L\'evy-driven SDEs.
Before, one could find an expression for the solution's derivative for particular driving processes, which may take quite different forms. Our approach yields a single, unifying expression valid for all pure-jump L\'evy drivers with drift.

A seminal result concerning the functional representation of SDEs is \cite{Yamada1971}, where it is proven that the solution of an SDE driven by time and a Brownian motion can be expressed as a function of the initial value and the Brownian motion. The drawback of this representation is that the function is dependent on the distribution of the solution at time $0$. This has been solved in \cite{Kallenberg1996}, where a universal representation for such SDEs has been proven. In \cite{Karandikar1981,Karandikar1995} and \cite[p.~396, Theorem 12.9]{Karandikar2019} a functional representation for the solution of general semimartingale SDEs has been proven in the same setting as herein, but for the Skorohod function space equipped with the topology of uniform convergence on compact subsets, see \cite[p.~35 f.~and Equation (12.3.10)]{Karandikar2019}. From this result we cannot conclude the Skorohod measuarbility of the functional representation. Although the $\sigma$-field of the Skorohod topology is equal to the $\sigma$-field generated by the balls of the metric of uniform convergence, this is a strict subset of the Borel $\sigma$-field of the topology of uniform convergence, see \cite[p.~157]{Billingsley1999}.\footnote{Separability (which fails for the Borel field of uniform convergence) would guarantee equality of the two.}

In addition, note that our result cannot be concluded from the measurability of $X$, since measurability of a composition of functions does not imply measurability of the functions themselves.

The proof of our result requires major changes in the technique known from \cite{Karandikar2019}. The key novel step in the first part of the proof is showing Skorohod measurability of the approximating sequence; in the second part of the proof, the key lies in proving that the limit of the sequence of stochastic processes, regarded as a sequence of random elements mapping into the space of all càdlàg functions equipped with the Skorohod topology, is indistinguishable from the solution of SDE \eqref{SDE}.

\section{Notations and Preliminaries}\label{sec:pre}

Let for all $n\in\N$, $\D_n = \{f\colon[0,T]\to \R^n: f \text{ is càdlàg}\}$.
We equip this space with the Skorohod topology, which is the topology generated by all coordinate mappings, or equivalently, the topology generated by all pointwise evaluations (cf.~\cite[p.~36]{Karandikar2019}).
For our proof we make use of an alternative definition of the Skorohod topology based on the following metric (cf.~\cite[p.~125 ff.]{Billingsley1999}). 
Define the set 
\begin{equation*}
\begin{aligned}\label{defLambda}
\Lambda = \{\lambda:[0,T]\to[0,T]: \, &\lambda \text{ is strictly increasing and continuous}\\
&\text{with } \lambda(0) = 0 \text{ and } \lambda(T) = T\}
\end{aligned}
\end{equation*}
and then we set for all $\lambda \in\Lambda$,
\begin{equation*}
\begin{aligned}\label{defNormLambda}
\|\lambda\|_\Lambda = \sup_{0\leq s<t\leq T}\Big|\log\Big( \frac{\lambda(t)-\lambda(s)}{t-s}\Big)\Big|.
\end{aligned}
\end{equation*}
Using this we define the Skorohod metric $d^S$ for all $x,y\in\D_n$ as
\begin{equation*}
\begin{aligned}\label{defSkoMetrik}
d^S(x,y) = \inf_{\lambda\in\Lambda} \max \Big\{ \|\lambda\|_\Lambda, \sup_{0\leq t\leq T} \| x(t) - y(\lambda(t))\|\Big\}.
\end{aligned}
\end{equation*}
Note that this metric is equivalent to the one originally used by Skorohod, see \cite[p.~421 f.]{Grikhman1974}.
The space $(\D_n, d^S)$ is separable and complete, see \cite[p.~128, Theorem 12.2]{Billingsley1999}. Moreover, the Skorohod topology is equal to the topology induced by the open subsets of $\D_n$ under $d^S$. The space $\D_n$ with this topology is a Polish space. Further, we denote by $\B(\D_n)$ the Borel $\sigma$-field generated by all open sets in $(\D_n, d^S)$.

Note that the following mappings are Borel measurable
\begin{eqnarray*}
&&\D_n \ni f\to f(t),\\
&&\D_n \ni f\to f(t-),
\end{eqnarray*}
for all  $t\in [0,T]$, cf.~\cite{Pestman1995}.

The space $\D_n$ can also be equipped with the $\sup$-norm $\|\cdot\|_\infty$. By $d$ we denote the corresponding metric. This space is a complete metric space and a topological vector space, however not separable. If we equip $\D_n$ with the Skorohod topology, it becomes separable and hence a Polish space, but then it is not a topological vector space anymore. It follows from the definition by choosing $\lambda$ equal to the identity that for all $x,y\in\D_n$ it holds that $d^S(x,y)\leq d(x,y)$. 
Let $L(\R^m,\R^d)$ be the space of all linear bounded operators from $\R^m$ to $\R^d$  equipped with the operator norm and let $\mathcal{B}(L(\R^m,\R^d))$ be the induced Borel $\sigma$-filed. 

Before we state our assumptions we define for all $n\in\N $, $x\in\D_n$, and $t\in[0,T]$ the function $x^t:[0,T]\to\R^n$, $x^t(s) = x(s\wedge t)$.

\begin{assumption}\label{assf}
For the function $f$ we assume:
\begin{itemize}
    \item[\namedlabel{assf0}{(i)}] the mapping $t\mapsto f(t,\zeta,\gamma) $ is càdlàg for all $(\zeta,\gamma)\in\D_r\times\D_d$,
	\item[\namedlabel{assf1}{(ii)}] $f$ is measurable with respect to $\B([0,T])\otimes\B(\D_r)\otimes\B(\D_d)$,
	\item[\namedlabel{assf2}{(iii)}] $f(t,\zeta,\gamma) = f(t,\zeta^t,\gamma^t)$ for all $\zeta \in\D_r$, $\gamma\in\D_d$, and $t\in[0,T]$,
	\item[\namedlabel{assf3}{(iv)}] there exists a function $C\colon [0,T]\times \D_r \to (0,\infty)$, which is measurable with respect to $\B([0,T])\otimes\B(\D_r)$ and has the property that $t\mapsto C(t,\zeta)$ is càdlàg, such that for all $\zeta\in\D_r$, $\gamma,\gamma_1,\gamma_2\in\D_d$ and $t\in[0,T]$ we have
	\begin{equation*}
	\begin{aligned}
	\| f(t,\zeta,\gamma)\| \leq C(t,\zeta) \Big(1+ \sup_{0\leq s\leq t} \|\gamma(s)\|\Big),
	\end{aligned}
	\end{equation*}
	\begin{equation*}
	\begin{aligned}
	\| f(t,\zeta,\gamma_1) - f(t,\zeta,\gamma_2)\| \leq C(t,\zeta) \Big(\sup_{0\leq s\leq t} \|\gamma_1(s)-\gamma_2(s)\|\Big).
	\end{aligned}
	\end{equation*}  
\end{itemize}
\end{assumption}

\begin{lemma}\label{gmeas}
If $f$ fulfils Assumption \ref{assf} \ref{assf1}, then $g$ defined by \eqref{defG} is $\B([0,T])\otimes\B(\D_r)\otimes\B(\D_d)$-measurable.
\end{lemma}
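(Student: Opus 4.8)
The plan is to deduce the measurability of $g$ from that of $f$ by expressing $g$ as a composition of $f$ with measurable maps, handling the left-limit operation and the boundary point $t=0$ separately. The core observation is that for $t\in(0,T]$ we have $g(t,\zeta,\gamma)=f(t-,\zeta,\gamma)$, so $g$ differs from $f$ only by replacing the first coordinate by a left limit in time; since the map $t\mapsto f(t,\zeta,\gamma)$ is càdlàg by Assumption \ref{assf} \ref{assf0}, this left limit exists everywhere.

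First I would fix a dissection of the domain: write $[0,T]=\{0\}\cup(0,T]$, both pieces Borel in $[0,T]$, so it suffices to prove measurability of $g$ restricted to $\{0\}\times\D_r\times\D_d$ and to $(0,T]\times\D_r\times\D_d$ separately, and then glue. On $\{0\}\times\D_r\times\D_d$ the map is $(\zeta,\gamma)\mapsto f(0,\zeta,\gamma)$, which is measurable as a section of the jointly measurable $f$. For the main part $(0,T]\times\D_r\times\D_d$, I would realize the left limit as a pointwise (hence measurable) limit: for $t\in(0,T]$,
\[
g(t,\zeta,\gamma)=f(t-,\zeta,\gamma)=\lim_{k\to\infty} f\bigl((t-\tfrac1k)\vee 0,\zeta,\gamma\bigr).
\]
Each map $(t,\zeta,\gamma)\mapsto f((t-\tfrac1k)\vee 0,\zeta,\gamma)$ is the composition of $f$ with the map $(t,\zeta,\gamma)\mapsto((t-\tfrac1k)\vee 0,\zeta,\gamma)$, which is continuous in $t$ and the identity in $(\zeta,\gamma)$, hence $\B([0,T])\otimes\B(\D_r)\otimes\B(\D_d)$-measurable; composing with the measurable $f$ keeps it measurable. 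A pointwise limit of $L(\R^m,\R^d)$-valued measurable maps into a (separable) Borel space is again measurable, so $g$ is measurable on $(0,T]\times\D_r\times\D_d$.

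Finally I would assemble: $g$ is measurable on each member of the finite measurable partition $\{\{0\},(0,T]\}$ of $[0,T]$ in the first coordinate, hence measurable on the whole product $\sigma$-field. The only subtlety — and the step I would be most careful about — is the interface between the left-limit operation and joint measurability: one must be sure that the approximating shifts $(t-\tfrac1k)\vee 0$ stay in $[0,T]$ (they do) and that the convergence $f((t-\tfrac1k)\vee0,\zeta,\gamma)\to f(t-,\zeta,\gamma)$ holds for every fixed $(\zeta,\gamma)$, which is exactly the existence of left limits guaranteed by the càdlàg assumption \ref{assf0}; no uniformity in $(\zeta,\gamma)$ is needed since measurability only requires pointwise limits. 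Everything else is routine composition and gluing of measurable maps.
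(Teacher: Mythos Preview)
Your proposal is correct and follows essentially the same approach as the paper. The paper does not write out a separate proof but states that it ``can be conducted the same way as in the following Lemma,'' whose argument approximates $g$ by the sequence $f_n(t,\zeta,\gamma)=f\bigl(\tfrac{\lceil nt\rceil-1}{n},\zeta,\gamma\bigr)\mathds{1}_{(0,T]}(t)+f(0,\zeta,\gamma)\mathds{1}_{\{0\}}(t)$ and takes the pointwise limit; your continuous time-shift $(t-\tfrac1k)\vee 0$ plays the same role as the grid point $\tfrac{\lceil nt\rceil-1}{n}$, and the remaining steps (composition with a measurable time-map, pointwise limit of measurable functions, handling $t=0$ separately) are identical in substance.
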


The proof can be conducted the same way as in the following Lemma which is less intuitive.

\begin{lemma}
Let $f$ satisfy Assumption \ref{assf} and let $g$ be defined by \eqref{defG}. Then for all  $\R^r$-valued càdlàg adapted processes $G$ and all $\R^d$-valued càdlàg adapted processes $X$ it holds that $(g(t,G,X))_{t\in[0,T]}$ is a predictable process.
\end{lemma}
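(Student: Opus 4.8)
The goal is to show that the process $t \mapsto g(t,G,X) = f(t-,G,X)$ (with the convention at $t=0$) is predictable whenever $G$ and $X$ are càdlàg adapted. The plan is to exhibit $(g(t,G,X))_{t\in[0,T]}$ as a pointwise limit of processes that are manifestly left-continuous and adapted (hence predictable), and then invoke that predictable processes are closed under pointwise limits. The natural candidates are built by discretizing the path arguments: for a mesh $\pi_n = \{t^n_k\}$ of $[0,T]$ with $|\pi_n|\to 0$, set $X^{(n)}_s = X_{t^n_{k}}$ for $s\in[t^n_k, t^n_{k+1})$, i.e.\ the piecewise-constant path that freezes $X$ at the left endpoints of the mesh (and similarly $G^{(n)}$); then define $g_n(t) = f\big(t-, G^{(n)}, X^{(n)}\big)$, using Assumption~\ref{assf}~\ref{assf2} so that only the values of the frozen paths up to time $t$ matter. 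Because on each interval $(t^n_k, t^n_{k+1}]$ the frozen paths $G^{(n)}, X^{(n)}$ depend only on the $\mathcal F_{t^n_k}$-measurable random variables $(G_{t^n_0},\dots,G_{t^n_k})$ and $(X_{t^n_0},\dots,X_{t^n_k})$, and $t\mapsto f(t-,\zeta,\gamma)$ is left-continuous for fixed paths, each $g_n$ is a finite sum of terms of the form $\mathds 1_{(t^n_k,t^n_{k+1}]}(t)\, \Phi_k(\omega, t)$ with $\Phi_k$ left-continuous in $t$ and $\mathcal F_{t^n_k}\otimes\B([0,T])$-measurable; such processes are predictable (they are limits of left-continuous adapted processes, or one checks directly that they generate the predictable $\sigma$-field). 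Here one uses Lemma~\ref{gmeas} / the measurability in Assumption~\ref{assf}~\ref{assf1} together with the Borel measurability of $f\mapsto f(t-)$ to get the required joint measurability of the maps $(\omega,t)\mapsto \Phi_k(\omega,t)$.

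The second ingredient is convergence: I would argue that for each fixed $(\omega,t)$, $g_n(t) \to g(t, G(\omega), X(\omega))$ as $n\to\infty$. Since $G^{(n)}(\omega) \to G(\omega)$ and $X^{(n)}(\omega)\to X(\omega)$ in an appropriate sense (pointwise away from the left limits, and in the Skorohod metric on $\D_r,\D_d$ for càdlàg paths along a sequence of meshes whose points eventually include any fixed jump time's neighborhood), and since $f$ is continuous in its path arguments — here the Lipschitz estimate in Assumption~\ref{assf}~\ref{assf3} handles the $\gamma$-argument uniformly, while continuity in $\zeta$ must be extracted from the structure, possibly requiring a further approximation or an additional regularity hypothesis on $f$ in $\zeta$ — the continuity of $t\mapsto f(t-,\cdot,\cdot)$ gives the convergence. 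One subtlety is that $t\mapsto f(t-,\zeta,\gamma)$ is left-continuous but need not be continuous, so the convergence $g_n(t)\to g(t,G,X)$ should be verified carefully at jump times of the relevant paths; choosing the meshes nested and with mesh points that asymptotically exhaust a countable dense set (including, almost surely, avoiding the at-most-countably-many fixed times that cause trouble) resolves this.

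Finally, I would assemble the argument: each $g_n$ is predictable by the first step; $g_n \to g(\cdot,G,X)$ pointwise on $\Omega\times[0,T]$ by the second step; and the class of predictable processes is closed under pointwise limits (it is the $\sigma$-field generated by left-continuous adapted processes, and pointwise limits of measurable functions are measurable with respect to that $\sigma$-field). Hence $(g(t,G,X))_{t\in[0,T]}$ is predictable. The main obstacle I anticipate is the continuity of $f$ in the $\zeta$-argument: Assumption~\ref{assf} only gives measurability in $\zeta$ (item~\ref{assf1}) and the Lipschitz/growth bounds are stated in $\gamma$ only (item~\ref{assf3}), so to push the approximation through one either needs the frozen paths $G^{(n)}$ to agree with $G$ up to time $t$ on the relevant mesh (using \ref{assf2} to replace $G^{(n)}$ by $G$ itself in the $\zeta$-slot, so that no continuity in $\zeta$ is needed at all — this is the cleaner route: freeze only $X$, keep $G$ exact), or one must invoke a separate continuity property. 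I expect the intended proof takes the first route: write $g_n(t) = f(t-, G, X^{(n)})$, so that by \ref{assf2} and left-continuity in $t$ this is a left-continuous adapted process in $(\omega,t)$ up to the $\B(\D_d)$-measurable dependence on the $\mathcal F$-discretized $X^{(n)}$, and then only the Lipschitz bound \ref{assf3} in $\gamma$ is needed for convergence. The remaining care is the joint measurability bookkeeping, which is routine given Lemma~\ref{gmeas}.
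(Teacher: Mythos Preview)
Your approach has a genuine gap in the convergence step. You propose to approximate $g(t,G,X)=f(t-,G,X)$ by $g_n(t)=f(t-,G,X^{(n)})$ where $X^{(n)}$ is the piecewise-constant discretization of $X$ along a mesh, and then invoke the Lipschitz bound in Assumption~\ref{assf}~\ref{assf3} to get $g_n(t)\to g(t,G,X)$. But that Lipschitz bound is a \emph{sup-norm} estimate,
\[
\|f(t-,G,X^{(n)})-f(t-,G,X)\|\leq C(t-,G)\sup_{0\leq s\leq t}\|X^{(n)}_s-X_s\|,
\]
and for a c\`adl\`ag path $X$ with jumps the right-hand side does \emph{not} tend to zero: if $X$ jumps at some $u\in(t^n_k,t^n_{k+1})$, then for $s$ just past $u$ one has $|X^{(n)}_s-X_s|\approx|\Delta X_u|$, uniformly in $n$. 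So neither uniform nor pointwise convergence of $g_n$ to $g$ follows. You note that $X^{(n)}\to X$ in the Skorohod metric, but Assumption~\ref{assf}~\ref{assf3} gives no continuity of $f$ in $\gamma$ with respect to the Skorohod metric, so this does not rescue the argument. Your discussion of obstacles focuses on continuity in $\zeta$ and on the left-continuity of $t\mapsto f(t-,\cdot,\cdot)$, but the real obstruction is the $\gamma$-variable.

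The paper avoids this entirely by discretizing the \emph{time} argument rather than the path arguments: it sets
\[
f_n(t,G,X)=f\Big(\tfrac{\lceil nt\rceil-1}{n},G,X\Big)\mathds{1}_{(0,T]}(t)+f(0,G,X)\mathds{1}_{\{0\}}(t),
\]
keeping $G$ and $X$ exact. Then $f_n$ is piecewise constant in $t$ on intervals of the form $(k/n,(k+1)/n]$, hence c\`agl\`ad; adaptedness follows from Assumption~\ref{assf}~\ref{assf2} since the frozen time $\frac{\lceil nt\rceil-1}{n}<t$. Convergence $f_n(t,G,X)\to f(t-,G,X)=g(t,G,X)$ is now immediate from the c\`adl\`ag property of $s\mapsto f(s,G,X)$ (Assumption~\ref{assf}~\ref{assf0}) alone --- no continuity in the path variables is needed. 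This is the missing idea: approximate in $t$, not in $\gamma$.
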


\begin{proof}
    Define the sequence of stochastic processes 
    $(f_n(t,G,X))_{t\in[0,T]}$ for all $n\in\N$, $\omega\in\Omega$ by 
    \[f_n(t,G(\omega),X(\omega)) = f\Big(\frac{\lceil nt\rceil -1}{n},G(\omega),X(\omega)\Big)\mathds{1}_{t\in(0,T]} + f(0,G(\omega),X(\omega))\mathds{1}_{t\in\{0\}}.\]
    This is a sequence of c\`agl\`ad functions that converges to $(g(t,G,X))_{t\in[0,T]}$ as $n\to\infty$ for all $t\in[0,T]$. 
    Since by Assumption \ref{assf} \ref{assf2}, $f_n(s,G(\omega),X(\omega))=f_n(s,G^s(\omega),X^s(\omega))$ for $s=\frac{\lceil nt\rceil-1}{n}$ and since this is $\mathcal{F}_t$-measurable, $f_n$ is adapted. This together with the c\`agl\`ad property of $f_n$ implies that $f_n$ is a generator function of the predictable $\sigma$-algebra. 
    Hence also $g$ is predictable.
\end{proof}

Due to the linear growth Assumption \ref{assf} \ref{assf3} it holds that $(g(t,G,X))_{t\in[0,T]}$ is locally bounded. Since by the previous lemma it is also predictable, we obtain the existence of the stochastic integral $\int_0^t g(s,G,X) \diff Y_s$ for $t\in[0,T].$

\section{Representation of the SDE solution as Skorohod measurable function of the driving processes}\label{sec:main}

The goal of this section is to prove that the solution of SDE \eqref{SDE} can be expressed as a Skorohod measurable function of its initial value, the process $G$, and the semimartingale $Y$. Note that in \cite{Karandikar2019} it is proven that the solution can be expressed as a measurable function with respect to the Borel-$\sigma$-field generated by the topology of uniform convergence. Although the $\sigma$-field of the Skorohod topology is equal to the $\sigma$-field generated by the balls of the metric of uniform convergence, this is a strict subset of the Borel $\sigma$-field of the topology of uniform convergence; separability would guarantee equality of the two. Hence our result cannot be concluded from \cite{Karandikar2019}.

\begin{theorem}\label{Main}
Let Assumption \ref{assf} hold. Then there exists a Skorohod measurable function $\Psi: \D_d \times\D_r\times\D_m \to \D_d$ such that for all $\R^d$-valued càdlàg processes $H$, all $\R^r$-valued càdlàg processes $G$, and all $\R^m$-valued càdlàg semimartingales $Y$, 
\begin{equation*}
\begin{aligned}
X = \Psi(H,G,Y)
\end{aligned}
\end{equation*} 
is the unique solution of 
\begin{equation*}
\begin{aligned}
X_t = H_t + \int_0^t g(s,G,X) \diff Y_s, \quad t\in[0,T].
\end{aligned}
\end{equation*}
\end{theorem}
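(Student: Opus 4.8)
The plan is to mimic the classical Picard–Lindelöf/Karandikar construction of the solution to \eqref{SDE} as a pathwise limit of explicit functionals of $(H,G,Y)$, but to carry out every step inside the measurable structure $(\D_n,\B(\D_n))$ rather than the non-separable uniform one, so that Skorohod measurability is preserved in the limit. First I would set up the discretization: for a mesh of $[0,T]$ of size $2^{-k}$, define a sequence of ``stopping-the-path'' operators and a corresponding explicit recursion that produces, for each fixed $(H,G,Y)$, a càdlàg process $X^{(k)}$ obtained by freezing $g$ on the left endpoints of the dyadic intervals. Each $X^{(k)}$ is built from $H$, from finitely many evaluations $G(t_i)$, $Y(t_i)$, $Y(t_i-)$, from the map $g$ (which is $\B([0,T])\otimes\B(\D_r)\otimes\B(\D_d)$-measurable by Assumption~\ref{assf}\ref{assf1} and the fact in Lemma~\ref{gmeas}), and from compositions and sums — all of these being Skorohod-Borel measurable operations on $\D_d$, since coordinate and left-limit evaluations $f\mapsto f(t)$, $f\mapsto f(t-)$ are $\B(\D_n)$-measurable as noted in Section~\ref{sec:pre}, and since addition and the relevant integral-against-a-step-function maps are continuous (hence measurable) in the Skorohod topology. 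This gives a sequence of Skorohod measurable maps $\Psi_k:\D_d\times\D_r\times\D_m\to\D_d$ with $\Psi_k(H,G,Y)=X^{(k)}$.

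The second ingredient is a pathwise convergence/contraction estimate. Using the linear growth and Lipschitz bounds in Assumption~\ref{assf}\ref{assf3} together with a pathwise estimate for the stochastic integral against the semimartingale $Y$ — here I would invoke the Karandikar-type pathwise construction of the Itô integral, under which $\int_0^\cdot g\,\diff Y$ is itself a measurable functional of the integrand path and of $Y$ — one shows that on a set of full probability the sequence $X^{(k)}$ is Cauchy in the uniform metric $d$, hence converges uniformly to a càdlàg limit $X^{(\infty)}$, which one then verifies solves \eqref{SDE}. Since $d^S\le d$, uniform convergence implies Skorohod convergence, so $X^{(k)}\to X^{(\infty)}$ in $(\D_d,d^S)$ pointwise (pathwise, a.s.), whence $\Psi_\infty:=\lim_k\Psi_k$ is Skorohod measurable wherever the limit exists; on the exceptional null set one redefines $\Psi_\infty$ to be, say, $\Psi_\infty\equiv 0$, preserving measurability. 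Uniqueness of the strong solution (again via the Lipschitz estimate and Gronwall) guarantees $X=\Psi_\infty(H,G,Y)$ for the given data.

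There is, however, a genuine subtlety that I expect to be the main obstacle, and it is exactly the point the introduction flags: the convergence $X^{(k)}\to X^{(\infty)}$ is a priori only \emph{almost sure} and only for a \emph{fixed} triple $(H,G,Y)$, whereas to conclude that $X=\Psi(H,G,Y)$ for \emph{all} admissible $(H,G,Y)$ we need $\Psi_\infty$ to be defined and to coincide with the solution everywhere on $\D_d\times\D_r\times\D_m$, not just off a data-dependent null set. The way around this is to make the limit deterministic: define $\Psi(H,G,Y)$ for \emph{every} $(H,G,Y)\in\D_d\times\D_r\times\D_m$ by
\begin{equation*}
\Psi(H,G,Y)(t)=\liminf_{k\to\infty}\Psi_k(H,G,Y)(t)
\end{equation*}
coordinatewise (with the convention that it is $0$ at any $t$ or on any path where this fails to produce a càdlàg function), which is manifestly $\B(\D_d)$-measurable as a pointwise $\liminf$ of measurable maps composed with the measurable evaluation functionals; then argue separately that for each admissible $(H,G,Y)$ the a.s. uniform convergence forces this deterministic $\liminf$ to equal the solution path for $\P$-a.e.\ $\omega$, and finally upgrade this to an indistinguishability statement — i.e.\ that the process $t\mapsto\Psi(H,G,Y)(t)$ and the solution $X$ are indistinguishable, not merely modifications — using càdlàg regularity of both. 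This last upgrade, from ``equal at each fixed $t$ a.s.'' to ``indistinguishable as random elements of $(\D_d,\B(\D_d))$'', is the delicate part, and it is where one must be careful that the exceptional null sets can be chosen uniformly in $t$; càdlàg paths on both sides, plus the fact that two càdlàg processes agreeing a.s.\ on a countable dense set agree up to indistinguishability, is the tool that closes the gap.
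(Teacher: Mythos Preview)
Your proposal has a genuine gap at the discretization step. With a \emph{deterministic} dyadic mesh, Riemann sums $\sum_i P_{t_i}(Y_{t_{i+1}\wedge t}-Y_{t_i\wedge t})$ against a general semimartingale $Y$ converge to $\int_0^t P_{s-}\,\diff Y_s$ only in probability (ucp), not almost surely; hence your $X^{(k)}$ will not converge pathwise, and no universal path functional can be extracted from them. Your later appeal to ``the Karandikar-type pathwise construction'' is the right instinct, but that construction \emph{requires} adaptive, oscillation-based times $t_j^{(n)}=t_j^{(n)}(\gamma,\zeta,\eta)$, not dyadic ones. This is exactly what the paper does: $t_{j+1}^{(n)}$ is the first time $s\mapsto f(s,\zeta,\Psi^{(n-1)}(\gamma,\zeta,\eta))$ has oscillated by $2^{-n}$ since $t_j^{(n)}$, which yields the uniform bound $\|S_t^{(n)}-\widetilde f(t,G,Z^{(n-1)})\|\le 2^{-n}$ that drives all the convergence estimates in part~2. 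Proving that these path-dependent times are $\B(\D_d)\otimes\B(\D_r)\otimes\B(\D_m)$-measurable is one of the paper's nontrivial new steps (done by writing the preimages $(t_{j+1}^{(n)})^{-1}([0,c])$ as countable unions over rational times), and your dyadic scheme sidesteps this work at the cost of losing the pathwise limit altogether.

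There are two smaller issues. First, addition on $\D_d$ is \emph{not} Skorohod-continuous --- Section~\ref{sec:pre} explicitly notes that $(\D_d,d^S)$ is not a topological vector space --- so your ``continuous hence measurable'' argument fails as stated. The paper instead invokes Pestman's theorem \cite{Pestman1995} that bounded linear operators on $(\D_n,\|\cdot\|_\infty)$ are Skorohod-Borel measurable, which handles addition, the matrix action of $\Gamma^{(n-1)}(t_j^{(n)},\cdot)$ on $\D_m$, and (combined with a right-continuity-in-$t$ argument and Bogachev's joint-measurability lemma) the stopping map $(t,\eta)\mapsto\eta^t$. Second, your coordinatewise $\liminf$ need not be c\`adl\`ag, and the set of $(\gamma,\zeta,\eta)$ on which it is c\`adl\`ag is not obviously in the product $\sigma$-field; the paper's cleaner route is to set $\Psi=\lim_{n}\Psi^{(n)}$ in $d^S$ where the limit exists and $0$ otherwise, using completeness and separability of $(\D_d,d^S)$ so that the Cauchy set is measurable and the limit is automatically $\D_d$-valued and Borel.
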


\begin{remark}
    By \cite[Theorem 12.9]{Karandikar2019} we know that under Assumption \ref{assf} the equation 
    \begin{equation*}
        \begin{aligned}
        X_t = H_t + \int_0^t g(s,G,X) \diff Y_s, \quad t\in[0,T],    
        \end{aligned}
    \end{equation*}
    has a unique solution $(X_t)_{t\in[0,T]}$. Hence in the proof of Theorem \ref{Main} we focus on finding a Skorohod measurable universal functional representation of the solution.
\end{remark}

For the convenience of the reader, we split the proof of this theorem in two parts. In the first part we construct a function $\Psi$ and prove that it is Skorohod measurable. In the second part we show that $X= \Psi(H,G,Y)$ solves the SDE.

For the construction of the function $\Psi$ in the first part we operate similar as in \cite[p.~394]{Karandikar2019}, but with the additional requirement that the function needs to be measurable with respect to the Borel-$\sigma$-algebra generated by the Skorohod topology. Hence, our proof will differ at some crucial points from the one of \cite[p.~394]{Karandikar2019}.

\begin{proof}[Proof of Theorem \ref{Main}, part 1]
We start by defining the mapping
\begin{equation*}
\begin{aligned}\label{M1}
\Psi^{(0)} \colon \D_d\times\D_r\times\D_m \to \D_d,\quad \Psi^{(0)}(\gamma,\zeta,\eta) = \gamma.
\end{aligned}
\end{equation*}
Since for an open set $O\in \B(\D_d)$ it holds that $(\Psi^{(0)})^{-1} (O) = O\times\D_r\times\D_m$ is again an open set, we know that $\Psi^{(0)}$ is continuous and hence also measurable with respect to $(\B(\D_d)\otimes\B(\D_r)\otimes\B(\D_m))/\B(\D_d)$.

Now we define $(\Psi^{(n)})_{n\in\N}$ and prove its measurability inductively:~assume that $\Psi^{(n-1)}\colon \D_d\times\D_r\times\D_m \to \D_d$ is already defined and proven to be measurable. Consider the function 
\begin{equation*}
\begin{aligned}\label{M1a}
&\Gamma^{(n-1)}\colon [0,T]\times\D_d\times\D_r\times\D_m \to L(\R^m,\R^d),\\
&\Gamma^{(n-1)}(t,\gamma,\zeta,\eta) = f(t,\zeta,\Psi^{(n-1)}(\gamma,\zeta,\eta)).
\end{aligned}
\end{equation*}
This function is measurable as a composition of measurable functions. Based on $\Gamma^{(n-1)}$ a sequence of time points $(t_j^{(n)})_{j\in\N}\subset [0,T]$ can be defined iteratively by $t_0^{(n)} = 0$ and 
\begin{equation*}\label{M2}
\begin{aligned}
t_{j+1}^{(n)} &= \inf\big\{s \in[ t_j^{(n)},T] : \| \Gamma^{(n-1)}(s,\gamma,\zeta,\eta)-\Gamma^{(n-1)}(t_j^{(n)},\gamma,\zeta,\eta)\| \geq 2^{-n} \text{ or }\\
&\quad\quad\quad\quad\quad\quad\quad\quad\quad \| \Gamma^{(n-1)}(s-,\gamma,\zeta,\eta)-\Gamma^{(n-1)}(t_j^{(n)},\gamma,\zeta,\eta)\| \geq 2^{-n}\big\} ,
\end{aligned}
\end{equation*}	
where we set $\inf \emptyset = T$. This sequence of time points we write as functions by defining
\[t_{j+1}^{(n)}\colon [0,T]\times \D_d\times\D_r\times\D_m \to [0,T], \quad (t_j^n, \gamma, \zeta, \eta) \mapsto t_{j+1}^{(n)}.\]
This functions are measurable, because it holds for $n$, $j \in \N$ and $c\in[0,T)$ that 
\begin{equation}\label{M3a}
\begin{aligned}
&(t_{j+1}^{(n)})^{-1}([0,c])\\
&= \bigcup_{q\in\Q} \Big(\Big( \bigcup_{s\in[q, c]\cap\Q} \Big( \Big\{(t_j^{(n)}, \gamma, \zeta, \eta) \in [0,T]\times \D_d \times \D_r \times \D_m:\\
&\quad\quad\quad\quad \quad\quad\quad\quad \quad \| \Gamma^{(n-1)}(s,\gamma, \zeta, \eta)-\Gamma^{(n-1)}(t_j^{(n)},\gamma, \zeta, \eta)\| \geq 2^{-n} \Big\}  \\
&\quad\quad\quad\quad \quad\quad\quad\quad  \cap \big([0,q] \times \D_d \times \D_r \times \D_m\big)\Big)\Big)\\
&\quad\quad\quad\quad \cup \Big(\bigcap_{m= 2^{n+1}}^\infty  \bigcup_{s\in [q, c]\cap\Q} \Big( \Big\{(t_j^{(n)}, \gamma, \zeta, \eta) \in [0,T]\times \D_d \times \D_r \times \D_m:\\
&\quad\quad\quad\quad \quad\quad\quad\quad \quad\quad \| \Gamma^{(n-1)}(s,\gamma, \zeta, \eta)-\Gamma^{(n-1)}(t_j^{(n)},\gamma, \zeta, \eta)\| \geq 2^{-n} - m^{-1} \Big\} \\
&\quad\quad\quad\quad \quad\quad\quad\quad \quad\quad\quad\quad \cap \big([0,q] \times \D_d \times \D_r \times \D_m\big) \Big)\Big)\Big).
\end{aligned}
\end{equation}	   
As $\Gamma^{(n-1)}\colon [0,T]\times\D_d\times\D_r\times\D_m \to L(\R^m,\R^d)$ is measurable, $\Gamma^{(n-1)}$ is also measurable for fixed time $s\in[0,T]$ as function from $\D_d\times\D_r\times\D_m$ to $L(\R^m,\R^d)$. As a consequence also the function 
\begin{equation*}\label{M4}
\begin{aligned}
(t_j^{(n)}, \gamma, \zeta, \eta) \mapsto 
 \| \Gamma^{(n-1)}(s,\gamma, \zeta, \eta)-\Gamma^{(n-1)}(t_j^{(n)},\gamma, \zeta, \eta)\| 
\end{aligned}
\end{equation*}	  
is measurable for all $s\in[0,T]$. Hence, $t_{j+1}^{(n)}$ is measurable as its preimage is a countable union of measurable sets, see \eqref{M3a}. 
Recalling $t^{(n)}_0 = 0$, we can prove that $t_{1}^{(n)}$ is measurable as a function from $\D_d\times\D_r\times\D_m$ to $[0,T]$ in a similar way. Now we interpret $t_{j+1}^{(n)}$ as a function from $\D_d\times\D_r\times\D_m$ to $[0,T]$ by plugging in the functions for $t_{j}^{(n)},..., t_{1}^{(n)}$ recursively. This function is measurable as a composition of measurable functions.

As a next step, following \cite[p.~395]{Karandikar2019}, we define $\Psi^{(n)}\colon\D_d\times\D_r\times\D_m \to \D_d$ by 
\begin{equation}\label{M5}
\begin{aligned}
\Psi^{(n)}(\gamma,\zeta,\eta)(s) = \gamma(s) + \sum_{j=0}^\infty \Gamma^{(n-1)}(t_j^{(n)},\gamma,\zeta,\eta)(\eta^{t_{j+1}^{(n)}} (s)-  \eta^{t_{j}^{(n)}} (s)). 
\end{aligned}
\end{equation}
In \cite[p.~395]{Karandikar2019} instead of the interval $[0,T]$ the unbounded interval $[0,\infty)$ is considered. In our case \eqref{M5} simplifies to a finite sum. 
To prove the measurability of $\Psi^{(n)}$,  we  first show that the function $h\colon [0,T] \times \D_m \to \D_m$ defined by $h(t,\eta) = \eta^t$, where as before $\eta^t$ is the process $\eta$ stopped at the time $t$, is measurable. 
For this, we use  \cite[Lemma 6.4.6]{Bogachev2007}, which states that it is sufficient to prove that for fixed $t\in[0,T]$ the mapping $\D_m \ni \eta \mapsto  \eta^t \in\D_m $ is measurable and for fixed $\eta\in\D_m$ the mapping $[0,T] \ni t \mapsto \eta^t\in\D_m$ is right-continuous in $t\in[0,T]$. 

First, we prove that for fixed $t\in[0,T]$ the mapping $\eta \mapsto  \eta^t$ is measurable. For this, we apply \cite[Theorem 4]{Pestman1995}, which claims that it is enough to show that the operator $T_t \colon \D_m \to \D_m$, $T_t\eta = \eta^t$ is a linear bounded operator, if we consider $\D_m$ equipped with the $\sup$-norm instead of the Skorohod topology. The linearity is easy to check and the boundedness follows from the fact that
\begin{equation}
\begin{aligned}\label{M5a}
\|\eta^t \|_\infty = \sup_{s\in[0,T]} \|\eta(t\wedge s) \| = \sup_{s\in[0,t]} \|\eta(s) \| \leq \sup_{s\in[0,T]} \|\eta(s) \| =\|\eta\|_\infty .
\end{aligned}
\end{equation}
Next, we prove that for fixed $\eta\in\D_m$, the mapping $t \mapsto \eta^t$ is right-continuous in $t\in[0,T]$. Let $t\in[0,T]$ and let $(t_k)_{k\in\N}\subset[0,T]$ be a monotone decreasing sequence that converges to $t$. Then with $\lambda = \operatorname{Id}$ we have for all $k\in\N$, 
\begin{equation*}\label{M6}
\begin{aligned}
d^S(\eta^t , \eta^{t_k}) 
\leq \sup_{s\in[0,T]} \|\eta(t\wedge s) - \eta(t_k\wedge s)\| 
= \sup_{s\in[t,t_k]} \|\eta(t) - \eta(s)\|\to 0,\text{ as } k\to\infty  .
\end{aligned}
\end{equation*}
This together with \eqref{M5a} implies the measurability of $h$.

Next, observe that in \eqref{M5}, $\Gamma^{(n-1)}(t_j^{(n)},\gamma, \zeta,\eta)\in L(\R^m,\R^d)$. This can also be considered a bounded linear operator from $\D_m$ to $\D_d$, which is applied for each time point to the càdlàg function evaluated at that point.
Here $\D_m$ and $\D_d$ are equipped with the $\sup$-norm. Then using \cite[p.~387, Theorem 4]{Pestman1995}, we get the measurability of this mapping in the Skorohod topology. Consequently, $\Psi^{(n)}$ is measurable as a composition and sum of measurable functions.

Now we define the function $\Psi\colon\D_d\times\D_r\times\D_m \to\D_d$ by
\begin{equation*}\label{M7}
\begin{aligned}
\Psi(\gamma,\zeta,\eta) = \left\{
\begin{array}{ll}
\lim_{n\to\infty} \Psi^{(n)}(\gamma,\zeta,\eta), & \textrm{if limit exists in Skorohod topology,} \\
0, & \, \textrm{otherwise.} \\
\end{array}
\right. 
\end{aligned}
\end{equation*}
The limit $\lim_{n\to\infty} \Psi^{(n)}(\gamma,\zeta,\eta)$ exists if and only if $(\Psi^{(n)}(\gamma,\zeta,\eta))_{n\in\N}$ is a Cauchy sequence, since $\D_d$ is complete with respect to the Skorohod topology. Hence, the set of all $(\gamma,\zeta,\eta)$ for which the above limit exists is given by
\begin{equation*}\label{M8}
\begin{aligned}
S:=&\Big\{(\gamma,\zeta,\eta)\in \D_d\times\D_r\times\D_m \colon\\
&\quad\quad\quad \forall \varepsilon >0\, \exists\,  n_0 \in\N : \forall\, m,n\geq n_0\colon d^S(\Psi^{(n)}(\gamma,\zeta,\eta),\Psi^{(m)}(\gamma,\zeta,\eta))<\varepsilon\Big\}\\
=& \bigcap_{k\in\N} \bigcup_{n_0\in\N} \bigcap_{m,n\geq n_0} \Big\{(\gamma,\zeta,\eta)\in \D_d\times\D_r\times\D_m \colon\\
&\quad\quad\quad\quad \quad\quad\quad\quad \quad d^S(\Psi^{(n)}(\gamma,\zeta,\eta),\Psi^{(m)}(\gamma,\zeta,\eta))<\frac{1}{k}\Big\}.
\end{aligned}
\end{equation*}
Hence, $\Psi^{(n)}\mathds{1}_S$ is measurable, because $S$ is measurable and $\Psi =\lim_{n\to\infty} \Psi^{(n)}\mathds{1}_S$ is measurable as the limit of measurable functions. This closes part $1$ of the proof of Theorem \ref{Main}.
\end{proof}

To prove that the above constructed Skorohod measurable function is indeed the function we are looking for in Theorem \ref{Main}, we need the following definitions and notations. In order to ensure uniform convergence on $[0,T]$, we consider in the following the time horizon $T +1$, but note that the definitions are of course valid for arbitrary time interval $[0, \widetilde T]$ with $\widetilde T \in (0,\infty)$.

\begin{definition}[cf.~\text{\cite[p.~368, Definition~11.4]{Karandikar2019}}]
A càdlàg adapted increasing process $V$ is said to be a dominating process for a real-valued semimartingale $\xi$ on $[0,T+1]$, if there exists a decomposition $\xi=M+A$, with a càdlàg locally square integrable martingale  $M$ with $M_0 = 0$, and a càdlàg process $A$ with finite variation paths such that the process $B$ defined by 
\begin{equation*}\label{DefDomPro}
\begin{aligned}
	B_t = V_t - 2\sqrt{2} ([M,M]_t +\langle M,M\rangle_t)^{1/2} -\sqrt{2}|A|_t, \quad t\in[0,T+1]
\end{aligned}
\end{equation*} 
is an increasing process with $B_0\geq0$.
\end{definition}

\begin{remark}\label{ExDomPr}
It is known that for each semimartingale $\xi$ there exists a dominating process $V$, see \cite[p.~368, Theorem~11.5]{Karandikar2019}. Furthermore, for each stopping time $\nu\colon \Omega\to[0,T+1]$ it holds that
\begin{equation*}\label{PropDomPro}
\begin{aligned}
	\E[\sup_{0\leq t<\nu}|\xi_t|^2] \leq \E[V_{\nu-}^2],
\end{aligned}
\end{equation*} 
see \cite[p.~369, Theorem~11.7]{Karandikar2019}.
\end{remark}

We also need the following theorem.

\begin{theorem}[cf.~\text{\cite[p.~373, Theorem~11.13]{Karandikar2019}}]\label{ThmEstTheta}
Let $\xi$ be a semimartingale and let $P$ be a locally bounded predictable process. Let $V$ be a dominating process for $\xi$. Then for any stopping time $\nu\colon \Omega\to[0,T+1]$ it holds
\begin{equation*}\label{EstTheta}
\begin{aligned}
	\E\Big[\sup_{0\leq t<\nu} \Big|\int_0^t P_s \diff \xi_s \Big|^2\Big] \leq \E[\theta_{\nu-}^2(P,V)],
\end{aligned}
\end{equation*}
where $\theta_t(P,V)$ is for all $t\in[0,T+1]$ defined by
\begin{equation*}\label{DefTheta}
\begin{aligned}
	\theta_t(P,V) = \Big(\int_0^t |P_s|^2\diff V_s^2\Big)^{1/2}+\int_0^t |P_s|\diff V_s.
\end{aligned}
\end{equation*}
Further,
\begin{equation*}\label{EstTheta2}
\begin{aligned}
	\E\Big[\sup_{0\leq t<\nu} \Big|\int_0^t P_s \diff \xi_s \Big|^2\Big] \leq 4 \E\Big[\Big( \sup_{0\leq s<\nu} |P_s^2|\Big) \textcolor{blue}{V_{\nu-}^2}\Big].
\end{aligned}
\end{equation*}
\end{theorem}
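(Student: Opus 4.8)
The plan is to exploit the decomposition built into the notion of a dominating process. So I would fix $M$, $A$ with $\xi = M+A$, $M$ a càdlàg locally square integrable martingale with $M_0=0$, $A$ càdlàg of finite variation, and $B_t := V_t - 2\sqrt 2([M,M]_t+\langle M,M\rangle_t)^{1/2} - \sqrt 2 |A|_t$ increasing with $B_0\ge 0$; then split $\int_0^t P_s\diff\xi_s = N_t + R_t$ with $N := \int P\diff M$ and $R := \int P\diff A$. Since $P$ is locally bounded and predictable, $N$ is a locally square integrable martingale with $[N]_t=\int_0^t|P_s|^2\diff[M,M]_s$ and $\langle N\rangle_t=\int_0^t|P_s|^2\diff\langle M,M\rangle_s$, while $R$ has finite variation paths with $\sup_{0\le t<\nu}|R_t|\le\int_0^{\nu-}|P_s|\diff|A|_s$. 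All of $[M,M]$, $\langle M,M\rangle$, $|A|$, $V$, $V^2$ are nonnegative and increasing, and every comparison between their Lebesgue--Stieltjes measures that I need will be read off from the hypothesis that $B$ is increasing. I would estimate $N$ and $R$ separately and then recombine.

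For the finite variation part, observe that $V - \sqrt 2|A| = B + 2\sqrt 2([M,M]+\langle M,M\rangle)^{1/2}$ is a sum of increasing processes, hence increasing, so $\sqrt 2\diff|A|_s\le\diff V_s$ and therefore $\sup_{0\le t<\nu}|R_t| \le \tfrac1{\sqrt 2}\theta_2$ pathwise, where $\theta_2 := \int_0^{\nu-}|P_s|\diff V_s$; this gives $\E[\sup_{0\le t<\nu}|R_t|^2]\le\tfrac12\E[\theta_2^2]$. For the martingale part, I would combine Doob's $L^2$ maximal inequality with the identities $\E[N_t^2]=\E[[N]_t]=\E[\langle N\rangle_t]$ to obtain $\E[\sup_{0\le t<\nu}|N_t|^2]\le 2\E[[N]_{\nu-}+\langle N\rangle_{\nu-}] = 2\E[\int_0^{\nu-}|P_s|^2\diff([M,M]_s+\langle M,M\rangle_s)]$, where a standard localization makes $N$ a genuine $L^2$ martingale and an approximation $\nu_n\uparrow\nu$ with $\nu_n<\nu$ turns $\sup_{t\le\nu_n}$ into $\sup_{t<\nu}$ and $[N]_{\nu_n},\langle N\rangle_{\nu_n}$ into $[N]_{\nu-},\langle N\rangle_{\nu-}$ by monotone convergence. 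The heart of the argument is then the measure inequality $\diff([M,M]_s+\langle M,M\rangle_s)\le\tfrac18\diff V_s^2$ on $[0,\nu)$: writing $\phi_s := ([M,M]_s+\langle M,M\rangle_s)^{1/2}$ and $\psi_s := \tfrac1{2\sqrt 2}V_s$, one has $\psi-\phi=\tfrac1{2\sqrt 2}(B+\sqrt 2|A|)\ge 0$ increasing and $\psi+\phi\ge 0$ increasing, so their product $\psi^2-\phi^2$ is increasing, i.e. $\diff\phi_s^2\le\diff\psi_s^2=\tfrac18\diff V_s^2$. Inserting this yields $\E[\sup_{0\le t<\nu}|N_t|^2]\le\tfrac14\E[\theta_1^2]$ with $\theta_1 := (\int_0^{\nu-}|P_s|^2\diff V_s^2)^{1/2}$, so that $\theta_{\nu-}(P,V)=\theta_1+\theta_2$.

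To finish, with $a:=\sup_{0\le t<\nu}|N_t|$ and $b:=\sup_{0\le t<\nu}|R_t|$ one has $\sup_{0\le t<\nu}|\int_0^t P\diff\xi|\le a+b$ and $(a+b)^2\le 2a^2+2b^2$, whence $\E[\sup_{0\le t<\nu}|\int_0^t P\diff\xi|^2]\le 2\E[a^2]+2\E[b^2]\le\tfrac12\E[\theta_1^2]+\E[\theta_2^2]\le\E[\theta_1^2]+\E[\theta_2^2]\le\E[(\theta_1+\theta_2)^2]=\E[\theta_{\nu-}^2(P,V)]$, which is the first assertion. The second follows by estimating $|P_s|\le\sup_{0\le u<\nu}|P_u|$ in both integrals of $\theta_{\nu-}(P,V)$, which gives $\theta_{\nu-}(P,V)\le 2(\sup_{0\le u<\nu}|P_u|)V_{\nu-}$, hence $\theta_{\nu-}^2(P,V)\le 4(\sup_{0\le u<\nu}|P_u|^2)V_{\nu-}^2$; taking expectations and chaining with the first bound concludes.

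The step I expect to be the main obstacle is making the constants close. The coefficients $2\sqrt 2$ and $\sqrt 2$ in the definition of the dominating process are tuned precisely so that Doob's constant $4$ on the martingale piece becomes $\tfrac14$ and the finite variation piece becomes $\tfrac12$, which is exactly the slack needed to absorb the factor $2$ lost in $(a+b)^2\le 2a^2+2b^2$; with constant $1$ on each piece the argument would not go through. The other delicate point is the systematic use of $\sup_{t<\nu}$ and of $\nu-$ on the right-hand side (rather than $\sup_{t\le\nu}$ and $\nu$), which is genuinely necessary because the integrand can jump at $\nu$, and which is handled by the localization-and-approximation device indicated above.
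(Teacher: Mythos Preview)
The paper does not prove this statement; it is quoted verbatim (with attribution) from \cite[Theorem~11.13]{Karandikar2019} and used as a black box in the proof of Theorem~\ref{Main}. So there is no in-paper proof to compare against.

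That said, your argument is the standard one and is essentially how the result is established in the cited reference: decompose $\xi=M+A$, control the finite-variation part pathwise by $\diff|A|\le\tfrac{1}{\sqrt 2}\diff V$, control the martingale part by Doob's $L^2$ inequality together with the measure comparison $\diff([M,M]+\langle M,M\rangle)\le\tfrac18\diff V^2$, and recombine. The constants $2\sqrt 2$ and $\sqrt 2$ in the definition of a dominating process are indeed chosen to make this bookkeeping close, exactly as you observe. Two small points worth tightening if you write this out in full: (i) the claim ``$\psi^2-\phi^2$ is increasing because $\psi-\phi$ and $\psi+\phi$ are nonnegative increasing'' is correct but deserves one line of justification (the product of two nonnegative increasing functions is increasing); (ii) the passage from Doob's bound for $\sup_{t\le\tau}$ with a localizing sequence to $\sup_{t<\nu}$ and $[N]_{\nu-},\langle N\rangle_{\nu-}$ is where all the care about the minus signs lives, and you have correctly flagged it rather than swept it under the rug. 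The derivation of the second inequality from the first via $\theta_{\nu-}(P,V)\le 2\big(\sup_{0\le s<\nu}|P_s|\big)V_{\nu-}$ is immediate and matches the intended use.
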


We continue with the second part of the proof of Theorem \ref{Main}, where we show that $\Psi$ constructed in part 1 of the proof has the property that $X= \Psi(H,G,Y)$ is the solution of SDE \eqref{SDE}.

\begin{proof}[Proof of Theorem \ref{Main}, part 2]
To prove the desired property we proceed similar as in the first part of the proof, but instead of defining a sequence of functions approximating $\Psi$, we define a sequence of càdlàg adapted stochastic processes approximating the solution of SDE \eqref{SDE}. First, we set $Z^{(0)} = H$. Next, we define $Z^{(n)}$ inductively as follows:~assume that $Z^{(0)},...,Z^{(n-1)}$ are already defined. We define a sequence $(\tau_j^{(n)})_{j\in\N}$ by $\tau_0^{(n)} = 0$ and for $j\geq 1$ recursively by
\begin{equation*}\label{M10}
\begin{aligned}
\tau_{j+1}^{(n)} &= \inf\Big(\big\{s \in[ \tau_j^{(n)},T] : \| f(s,G,Z^{(n-1)})-f(\tau_j^{(n)},G,Z^{(n-1)})\| \geq 2^{-n} \text{ or }\\
&\quad\quad\quad\quad \quad\quad\quad\quad  \| f(s-,G,Z^{(n-1)})-f(\tau_j^{(n)},G,Z^{(n-1)})\| \geq 2^{-n}\big\} \cup \{T\} \Big).
\end{aligned}
\end{equation*}
Since $s\mapsto  \| f(s,G,Z^{(n-1)})-f(\tau_j^{(n)},G,Z^{(n-1)})\|$ is a càdlàg adapted process, this is a sequence of stopping times. Further, it holds that $\lim_{j\to\infty} \tau^{(n)}_j = T$, in particular $(\tau_j^{(n)})_{j\in\N}$ is $\P$-a.s.~eventually constant $T$. Now we are able to define $Z^{(n)}_0 = H_0 $ and for all $j\in\N_0$, $\tau^{(n)}_j < t \leq \tau^{(n)}_{j+1}$ by 
\begin{equation*}\label{M11}
\begin{aligned}
Z^{(n)}_t = H_t + f(\tau^{(n)}_j, G, Z^{(n-1)})(Y_t - Y_{\tau^{(n)}_j}).
\end{aligned}
\end{equation*}
Equivalent for all $t\in[0,T]$,
\begin{equation}\label{M12}
\begin{aligned}
Z^{(n)}_t = H_t + \sum_{j=0}^{\infty} f(\tau^{(n)}_j, G, Z^{(n-1)})\Big(Y^{\tau^{(n)}_{j+1}}_{t}  - Y^{\tau^{(n)}_{j}}_{t}\Big).
\end{aligned}
\end{equation}
Comparing equation \eqref{M12} and \eqref{M5} we obtain that for almost all $\omega\in\Omega$ it holds
\begin{equation*}\label{M12a}
\begin{aligned}
Z^{(n)}_t(\omega) = \Psi^{(n)}( H(\omega), G(\omega), Y(\omega)).
\end{aligned}
\end{equation*}
In the next step we explore the convergence of the sequence $(Z^{(n)})_{n\in\N}$. For this we consider in the following the time interval $[0,T+1]$. We extend the filtration $(\mathcal{F}_t)_{t\in[0,T]}$ to $(\mathcal{F}_t)_{t\in[0,T+1]}$ by defining $\mathcal{F}_t = \mathcal{F}_T$ for all $t\in(T,T+1]$. Further we use for all $n\in\N$ the notation $\D_n([0,T+1]) = \{f\colon[0,T+1]\to \R^n: f \text{ is càdlàg}\}$ and define $\widetilde f\colon [0,T+1]\times\D_r([0,T+1])\times \D_d([0,T+1]) \to L(\R^m,\R^d)$ by $\widetilde f(t,\zeta,\gamma) = f(t,\zeta|_{[0,T]},\gamma|_{[0,T]}) $ for $t\in[0,T]$ and $\widetilde f(t,\zeta,\gamma) = f(T,\zeta|_{[0,T]},\gamma|_{[0,T]}) $ for $t\in(T,T+1]$.
Moreover, we define $H, G,Y,(Z^{(n)})_{n\in\N}$ on the interval $[0,T+1]$ by extending them constantly on the interval $[T,T+1]$.

We follow \cite[p.~396]{Karandikar2019} and define for all $n\in\N$ the sequences $(S^{(n)})_{n\in\N}$ and $(R^{(n)})_{n\in\N}$ through
\begin{equation}\label{M13}
\begin{aligned}
S^{(n)}_t &= \sum_{j=0}^\infty \widetilde f(\tau_j^{(n)},G, Z^{(n-1)})\mathds{1}_{[\tau^{(n)}_j, \tau^{(n)}_{j+1})}(t) +\widetilde f(T,G, Z^{(n-1)})\mathds{1}_{[T,T+1]}(t), \quad t\in[0,T+1],\\
R^{(n)}_t &= H_t + \int_0^t \widetilde f(s-, G,Z^{(n-1)})\diff Y_s, \quad t\in[0,T+1].
\end{aligned}
\end{equation}
It holds that 
\begin{equation}\label{M14}
\begin{aligned}
Z^{(n)}_t = H_t + \int_0^t S_{s-}^{(n)}\diff Y_s,
\end{aligned}
\end{equation}
here we need $s-$ in the integrand, because the half open interval $[\tau_j^{(n)}, \tau_{j+1}^{(n)})$ needs to have the form $(\tau_j^{(n)}, \tau_{j+1}^{(n)}]$ to deliver the correct integral. Furthermore,  
\begin{equation}\label{M15}
\begin{aligned}
\big\|S_t^{(n)} -\widetilde f(t,G,Z^{(n-1)})\big\| \leq 2^{-n}.
\end{aligned}
\end{equation}
Since $Y$ is an $\R^m$-valued semimartingale, we know that each component of $Y$ is a semimartingale and hence admits a dominating process by Remark \ref{ExDomPr}. Summing up these dominating processes we obtain a common dominating process, which we denote by $V$. Now we define the process $U$ by
\begin{equation*}\label{M16}
\begin{aligned}
U_t = V_t+V_t^2 + \sup_{0\leq s \leq t} \|H_s\| + \widetilde C(t,G),\quad t\in[0,T+1].
\end{aligned}
\end{equation*} 
Here $\widetilde C(t,G(\omega))$ is defined using $C(t,G|_{[0,T]}(\omega))$  as in Assumption \ref{assf} \ref{assf3} for $t\in[0,T]$ by $\widetilde C(t,\zeta) = C(t,\zeta|_{[0,T]})$ and for $t\in(T,T+1]$ by $\widetilde C(t,\zeta) = C(T,\zeta|_{[0,T]})$. Further define the sequence of stopping times $(\nu_j)_{j\in\N}$ for all $j\in\N$ by 
\begin{equation}\label{M17}
\begin{aligned}
\nu_j = \inf\big(\{t\in[0,T\textcolor{blue}{+1}]: U_t \geq j \text{ or } U_{t-}\geq j\}\cup\{T+1\}\big).
\end{aligned}
\end{equation} 
The sequence defined in \eqref{M17} is indeed a sequence of stopping times, since $U$ is a càdlàg process, see \cite[Theorem 2.46]{Karandikar2019}.
Further, it holds that $\lim_{j\to\infty} \nu_j = T+1$. 
Next, combining \eqref{M13} and \eqref{M14} we get 
\begin{equation}\label{M18}
\begin{aligned}
&\E\Big[\sup_{0\leq t< \nu_j}\|R^{(n)}_t - Z^{(n)}_t\|^2\Big]\\
&= \E\Big[\sup_{0\leq t< \nu_j}\Big\|H_t +\int_0^t \widetilde f(s-,G, Z^{(n-1)})\diff Y_s - H_t - \int_0^t S_{s-}^{(n)}\diff Y_s\Big\|^2\Big]\\
&= \E\Big[\sup_{0\leq t< \nu_j}\Big\|\int_0^t \big(\widetilde f(s-,G, Z^{(n-1)})- S_{s-}^{(n)}\big)\diff Y_s\Big\|^2\Big].
\end{aligned}
\end{equation} 
To continue this calculation, recall the following standard notation. For a matrix $A$ we denote by $A_{ij}$ the entry of $A$ in the $i$-th row and the $j$-th column and for a vector $b$ we denote by $b_j$ the $j$-th entry of the vector. Hence, for $i\in\{1,..,d\}$ the $i$-th entry of the vector
\[\int_0^t (\widetilde f(s-,G, Z^{(n-1)})- S_{s-}^{(n)})\diff Y_s\]
is given by 
\begin{equation}\label{M19}
\begin{aligned}
&\Big(\int_0^t \big(\widetilde f(s-,G, Z^{(n-1)})- S_{s-}^{(n)}\big)\diff Y_s\Big)_i\\
&\quad\quad = \sum_{k=1}^{m} \int_0^t \big(\widetilde f(s-,G, Z^{(n-1)})- S_{s-}^{(n)}\big)_{ik}\diff (Y_s)_k.
\end{aligned}
\end{equation} 
Using \eqref{M19} and applying the Cauchy-Schwarz inequality to \eqref{M18} we obtain that
\begin{equation}\label{M20}
\begin{aligned}
&\E\Big[\sup_{0\leq t< \nu_j}\Big\|\int_0^t \big(\widetilde f(s-,G, Z^{(n-1)})- S_{s-}^{(n)}\big)\diff Y_s\Big\|^2\Big]\\
&\leq \E\Big[\sup_{0\leq t< \nu_j}\sum_{i=1}^{d} m \sum_{k=1}^m \Big|\int_0^t \big(\widetilde f(s-,G, Z^{(n-1)})- S_{s-}^{(n)}\big)_{ik}\diff (Y_s)_k\Big|^2\Big]\\
&\leq \sum_{i=1}^{d} m \sum_{k=1}^m \E\Big[ \sup_{0\leq t< \nu_j}\Big|\int_0^t \big(\widetilde f(s-,G, Z^{(n-1)})- S_{s-}^{(n)}\big)_{ik}\diff (Y_s)_k\Big|^2\Big].\\
\end{aligned}
\end{equation}
We combine this with the second statement of Theorem \ref{ThmEstTheta} and use the definition of the stopping time $\nu_j$ in \eqref{M17} as well as the estimate in \eqref{M15} to obtain that
\begin{equation}\label{M21}
\begin{aligned}
&\sum_{i=1}^{d} m \sum_{k=1}^m \E\Big[ \sup_{0\leq t< \nu_j}\Big|\int_0^t (\widetilde f(s-,G, Z^{(n-1)})- S_{s-}^{(n)})_{ik}\diff (Y_s)_k\Big|^2\Big]\\
&\leq 4 \sum_{i=1}^{d} m \sum_{k=1}^m \E\Big[ \sup_{0\leq s< \nu_j}\Big|(\widetilde f(s-,G, Z^{(n-1)})- S_{s-}^{(n)})_{ik}\Big|^2 V_{\nu_j}^2\Big]\\
&\leq 4 \sum_{i=1}^{d} m \sum_{k=1}^m \E\Big[ \sup_{0\leq s< \nu_j}\Big|(\widetilde f(s-,G, Z^{(n-1)})- S_{s-}^{(n)})_{ik}\Big|^2 j^2\Big]\\
&\leq 4 \sum_{i=1}^{d} m \sum_{k=1}^m \cdot 2^{-2n} j^2 = 4 d m^2\, 2^{-2n}\, j^2.
\end{aligned}
\end{equation}
Combining \eqref{M18}, \eqref{M20}, and \eqref{M21} we conclude that 
\begin{equation*}\label{M22}
\begin{aligned}
&\E\Big[\sup_{0\leq t< \nu_j}\|R^{(n)}_t - Z^{(n)}_t\|^2\Big] \leq  4 d m^2\, 2^{-2n}\, j^2.
\end{aligned}
\end{equation*}
Next, we define for all $n\in\N_0$, $t\in[0,T+1]$,
\begin{equation*}\label{M23}
\begin{aligned}
A^{(n)}_t = \sup_{0\leq s\leq t} \|Z_s^{(n+1)} -Z_s^{(n)}\|.
\end{aligned}
\end{equation*}
For any  stopping time $ \tau\leq\nu_j$ and for all $n\in\N$ we estimate the second moment of $A^{(n)}_{\tau-}$ by
\begin{equation}\label{M24}
\begin{aligned}
&\E\Big[\big(A^{(n)}_{\tau-}\big)^2\Big]
=\E\Big[\sup_{0\leq t < \tau} \|Z_t^{(n+1)} -Z_t^{(n)}\|^2\Big]\\
&\leq 3\E\Big[\sup_{0\leq t< \tau} \|R_t^{(n+1)} -Z_t^{(n+1)}\|^2\Big] +3\E \Big[\sup_{0\leq t< \tau} \|R_t^{(n)} -Z_t^{(n)}\|^2\Big]\\
&\quad +3\E \Big[\sup_{0\leq t< \tau} \|R_t^{(n+1)} -R_t^{(n)}\|^2\Big]\\
&\leq 12 d m^2 j^2 2^{-2n} (1+ 2^{-2}) + 3\E \Big[\sup_{0\leq t< \tau} \|R_t^{(n+1)} -R_t^{(n)}\|^2\Big].
\end{aligned}
\end{equation}
Further, 
\begin{equation}\label{M25}
\begin{aligned}
&\E \Big[\sup_{0\leq t< \tau} \|R_t^{(n+1)} -R_t^{(n)}\|^2\Big]\\
&= \E \Big[\sup_{0\leq t<\tau} \Big\|\int_0^t \big(\widetilde f(s-, G,Z^{(n)})- \widetilde f(s-, G,Z^{(n-1)})\big)\diff Y_s\Big\|^2\Big]\\
&\leq \E\Big[\sup_{0\leq t< \tau}\sum_{i=1}^{d} m \sum_{k=1}^m \Big|\int_0^t \big(\widetilde f(s-,G, Z^{(n)})- \widetilde f(s-, G,Z^{(n-1)})\big)_{ik}\diff (Y_s)_k\Big|^2\Big]\\
&\leq \sum_{i=1}^{d} m \sum_{k=1}^m \E\Big[ \sup_{0\leq t< \tau}\Big|\int_0^t \big(\widetilde f(s-,G, Z^{(n)})- \widetilde f(s-, G,Z^{(n-1)})\big)_{ik}\diff (Y_s)_k\Big|^2\Big].\\
\end{aligned}
\end{equation}
Next, we apply the first statement of Theorem \ref{ThmEstTheta},  to estimate for all $i \in\{1,..,d\}$, $k\in\{1,...,m\}$,
\begin{equation*}\label{M26}
\begin{aligned}
& \E\Big[ \sup_{0\leq t< \tau}\Big|\int_0^t \big(\widetilde f(s-,G, Z^{(n)})- \widetilde f(s-, G,Z^{(n-1)})\big)_{ik}\diff (Y_s)_k\Big|^2\Big]\\
&\leq \E \big[\theta^2_{\tau-} \big(\big(\widetilde f(s-,G, Z^{(n)})- \widetilde f(s-, G,Z^{(n-1)})\big)_{ik},V\big)\big]\\
&\leq \E \Big[ \Big(\Big[\int_0^{\tau-} \big|\big(\widetilde f(s-,G, Z^{(n)})- \widetilde f(s-, G,Z^{(n-1)})\big)_{ik}\big|^2 \diff V_s^2\Big]^{1/2} \\
&\quad\quad\quad\quad + \int_0^{\tau-} \big|\big(\widetilde f(s-,G, Z^{(n)})- \widetilde f(s-, G,Z^{(n-1)})\big)_{ik}\big| \diff V_s\Big)^2\Big]\\
&\leq \E \Big[ \Big(\Big[\int_0^{\tau-} \widetilde C(s-,G)^2 \sup_{0\leq u< s} \|Z^{(n)}_u - Z^{(n-1)}_u\|^2 \diff V_s^2\Big]^{1/2} \\
&\quad\quad\quad\quad + \int_0^{\tau-} \widetilde C(s-,G) \sup_{0\leq u< s} \|Z^{(n)}_u - Z^{(n-1)}_u\| \diff V_s\Big)^2\Big]\\
&\leq j^2 \E \Big[ \Big(\Big[\int_0^{\tau-} \sup_{0\leq u< s} \|Z^{(n)}_u - Z^{(n-1)}_u\|^2 \diff V_s^2\Big]^{1/2} \\
&\quad\quad\quad\quad + \int_0^{\tau-} \sup_{0\leq u< s} \|Z^{(n)}_u - Z^{(n-1)}_u\| \diff V_s\Big)^2\Big]\\
&= j^2 \E \Big[ \Big(\Big[\int_0^{\tau-} \big(A_{s-}^{(n-1)}\big)^2 \diff V_s^2\Big]^{1/2}
+ \int_0^{\tau-} A_{s-}^{(n-1)} \diff V_s\Big)^2\Big]\\
&\leq 2\, j^2 \E \Big[ \int_0^{\tau-} \big(A_{s-}^{(n-1)}\big)^2 \diff V_s^2
+ \Big(\int_0^{\tau-} A_{s-}^{(n-1)} \diff V_s\Big)^2\Big].
\end{aligned}
\end{equation*}
Here we used additionally Assumption \ref{assf} \ref{assf3}, the definition of $\nu_j$, and the definition of $A_{t-}^{(n-1)}$. In the next step we apply Jensen's inequality and obtain
\begin{equation}\label{M26a}
\begin{aligned}
& \E\Big[ \sup_{0\leq t< \tau}\Big|\int_0^t \big(\widetilde f(s-,G, Z^{(n)})- \widetilde f(s-, G,Z^{(n-1)})\big)_{ik}\diff (Y_s)_k\Big|^2\Big]\\
&\leq 2\, j^2 \E \Big[ \int_0^{\tau-} \big(A_{s-}^{(n-1)}\big)^2 \diff V_s^2
+ V_{\tau-} \int_0^{\tau-} \big(A_{s-}^{(n-1)}\big)^2 \diff V_s\Big]\\
&\leq 2\, j^3 \E \Big[ \int_0^{\tau-} \big(A_{s-}^{(n-1)}\big)^2 \diff ( V_s^2 +V_s) \Big].
\end{aligned}
\end{equation}
 Plugging \eqref{M26a} and  \eqref{M25} into \eqref{M24} yields
\begin{equation}\label{M27}
\begin{aligned}
&\E\big[\big(A^{(n)}_{\tau-}\big)^2\big]\\
&\leq 15 d m^2 j^2 2^{-2n} + 3\sum_{i=1}^{d} m \sum_{k=1}^m 2\, j^3 \E \Big[ \int_0^{\tau-} \big(A_{s-}^{(n-1)}\big)^2 \diff ( V_s^2 +V_s) \Big]\\
&= 15 d m^2 j^2 2^{-2n} + 6 d m^2 j^3 \E \Big[ \int_0^{\tau-} \big(A_{s-}^{(n-1)}\big)^2 \diff ( V_s^2 +V_s) \Big].
\end{aligned}
\end{equation}
Using similar considerations as in \eqref{M20} and the second statement of Theorem \ref{ThmEstTheta} we get
\begin{equation}\label{M28}
\begin{aligned}
&\E\big[\big(A_{\tau-}^{(0)}\big)^2\big] = \E\Big[\Big(\sup_{0\leq t <\tau} \|Z_t^{(1)} -Z_t^{(0)}\|\Big)^2\Big]\\
&=\E\Big[\Big(\sup_{0\leq t<\tau} \Big\|H_t+\int_0^t S_{s-}^{(1)} \diff Y_s-H_t\Big\|\Big)^2\Big]\\
&\leq \E\Big[\sup_{0\leq t< \tau}\sum_{i=1}^{d} m \sum_{k=1}^m \Big|\int_0^t (S_{s-}^{(1)})_{ik}\diff (Y_s)_k\Big|^2\Big]\\
&\leq \sum_{i=1}^{d} m \sum_{k=1}^m \E\Big[ \sup_{0\leq t< \tau}\Big|\int_0^t (S_{s-}^{(1)})_{ik}\diff (Y_s)_k\Big|^2\Big]\\
&\leq 4 \sum_{i=1}^{d} m \sum_{k=1}^m   \E\Big[ \Big(\sup_{0\leq t< \tau} |(S_{t-}^{(1)})_{ik}|^2  V_{\tau-}^2 \Big)\Big]\\
&\leq  4\,dm^2  \E\Big[\Big( \sup_{0\leq t< \tau} \|S_{t-}^{(1)}\|^2 \Big) V_{\tau-}^2 \Big]\\
&\leq  4\,dm^2 j^2 \E\Big[ \sup_{0\leq t< \tau} \|S_{t-}^{(1)}\|^2 \Big].
\end{aligned}
\end{equation}
Using \eqref{M13} and Assumption \ref{assf} \ref{assf3} we obtain that
\begin{equation}\label{M28a}
\begin{aligned}
&\E\Big[ \sup_{0\leq t< \tau} \|S_{t-}^{(1)}\|^2 \Big]\\
&\leq \E\Big[ \sup_{0\leq t< \tau} \Big\|\sum_{j=0}^\infty \widetilde f(\tau_j^{(1)},G, Z^{(0)})\mathds{1}_{(\tau^{(1)}_j, \tau^{(1)}_{j+1}]}(t) \Big\|^2 \Big]\\
&= \E\Big[ \sup_{0\leq t< \tau} \sum_{j=0}^\infty \Big\| \widetilde f(\tau_j^{(1)},G, H)\Big\|^2 \mathds{1}_{(\tau^{(1)}_j, \tau^{(1)}_{j+1}]}(t) \Big]\\
&\leq \E\Big[ \sup_{0\leq t< \tau} \sum_{j=0}^\infty \| \widetilde C(\tau_j^{(1)} ,G) \|^2 \Big(1+ \sup_{0\leq s \leq \tau_j^{(1)}} \|H_s\|\Big)^2 \mathds{1}_{(\tau^{(1)}_j, \tau^{(1)}_{j+1}]}(t) \Big]\\
&\leq\E\Big[ \sup_{0\leq t< \tau} \| \widetilde C(t ,G) \|^2 \sup_{0\leq t< \tau} (1+  \|H_t\|)^2 \Big]\\
&\leq j^2 (1+j)^2.
\end{aligned}
\end{equation}
Plugging \eqref{M28a} into \eqref{M28} we get
\begin{equation}\label{M28b}
\begin{aligned}
&\E\big[\big(A_{\tau-}^{(0)}\big)^2\big] \leq 
 4 d m^2 j^4 (1+j)^2.
\end{aligned}
\end{equation}
Next, we define for all $t\in[0,T+1]$,
\begin{equation*}\label{M29}
\begin{aligned}
B_t = \sum_{n=0}^\infty 2^n (A_t^{(n)})^2.
\end{aligned}
\end{equation*}
For some stopping time $\tau\leq \nu_j$, \eqref{M27} and \eqref{M28b} assure
\begin{equation*}\label{M30}
\begin{aligned}
&\E\big[B_{\tau-}\big] = \E\Big[\sum_{n=0}^\infty 2^n \big(A_{\tau-}^{(n)}\big)^2\Big]
= \sum_{n=0}^\infty 2^n \E\big[\big(A_{\tau-}^{(n)}\big)^2\big]\\
&\leq \E\big[\big(A_{\tau-}^{(0)}\big)^2\big]\\
&\quad + \sum_{n=1}^\infty\Big( 2^n 15 d m^2 j^2 2^{-2n} +  2^n  6 d m^2 j^3 \E \Big[ \int_0^{\tau-} \big(A_{s-}^{(n-1)}\big)^2 \diff ( V_s^2 +V_s) \Big]\Big)\\
&\leq 4 d m^2 j^4 (1+j)^2 + 15 d m^2 j^2 \sum_{n=1}^\infty 2^{-n} \\
&\quad + 12 d m^2 j^3 \E \Big[ \sum_{n=1}^\infty 2^{n-1} \int_0^{\tau-} \big(A_{s-}^{(n-1)}\big)^2 \diff ( V_s^2 +V_s) \Big]\\
&= 4 d m^2 j^4 (1+j)^2 +  15 d m^2 j^2 +   12 d m^2 j^3 \E \Big[\int_0^{\tau-} B_{s-} \diff (V_s^2 + V_s)\Big].
\end{aligned}
\end{equation*}
Now we apply Gronwall's inequality, see \cite[Theorem~12.1]{Karandikar2019}, to conclude that there exists a constant $c\in(0,\infty)$, which only depends on $j$, such that
\begin{equation*}\label{M31}
\begin{aligned}
\E[B_{\nu_j -}] \leq c.
\end{aligned}
\end{equation*}
Hence, for all $j\in\N$ we have proven that 
\[\sum_{n=0}^\infty 2^n \E\big[\big(A_{\nu_j-}^{(n)}\big)^2 \big]<\infty.\]
This implies that for large $n\in\N$, $\E\big[\big(A_{\nu_j -}^{(n)}\big)^2\big]< 2^{-n}$. Therefore, it holds that 
\begin{equation*}\label{M32}
\begin{aligned}
&\sum_{n=0}^\infty \Big(\E\big[\big(A_{\nu_j -}^{(n)}\big)^2\big]\Big)^{\frac{1}{2}} = \sum_{n=0}^\infty  \Big(\E\Big[\sup_{0\leq s<\nu_j} \| Z^{(n+1)}_s - Z^{(n)}_s\|^2 \Big]\Big)^{\frac{1}{2}}\\
&= \sum_{n=0}^\infty  \Big\|\sup_{0\leq s<\nu_j} \| Z^{(n+1)}_s - Z^{(n)}_s\|\Big\|_2 <\infty.
\end{aligned}
\end{equation*}
Hence, we obtain 
\begin{equation}\label{M33}
\begin{aligned}
\Big\| \sum_{n=0}^\infty \sup_{0\leq s <\nu_j} \|Z^{(n+1)}_s - Z^{(n)}_s\| \Big\|_2 <\infty.
\end{aligned}
\end{equation}
This implies that
\begin{equation*}\label{M35}
\begin{aligned}
N= \bigcup_{j=1}^\infty \Big\{\omega \in\Omega : \sum_{n=0}^\infty \sup_{0\leq s<\nu_j} \|Z^{(n+1)}_s - Z^{(n)}_s\| = \infty \Big\}
\end{aligned}
\end{equation*}
is a $\P$-null set. Hence, for all $\omega \in\Omega\setminus N$ the sequence $(Z_s^{(n)}(\omega))_{n\in\N}$ converges uniformly on $[0,\nu_j (\omega))$ for all $j\in\N$.Since $\P(N)=0$ and because of the definition of $\nu_j$ we have that $Z^{(n)}$ converges uniformly on $[0,T+1)$. 
Therefore, the process $(\widetilde Z_t)_{t\in[0,T]}$ defined by
\begin{equation*}\label{M36}
\begin{aligned}
\widetilde Z_t(\omega) = \left\{
\begin{array}{ll}
\lim_{n\to\infty} Z^{(n)}_t(\omega), & \omega  \in\Omega\setminus N, \\
0, & \omega \in N \\
\end{array}
\right.
\end{aligned}
\end{equation*}
is well-defined and $Z^{(n)}$ converges uniformly on $[0,T]$ to $\widetilde Z$ almost surely. Hence, $Z^{(n)}$ also converges almost surely to $\widetilde Z$ in the Skorohod metric, since by the definition of the Skorohod metric it follows that every convergent sequence in the $\sup$-norm is also convergent in the Skorohod metric.
Next we define a new process for all $t\in[0,T]$ by
\begin{equation*}\label{M37}
\begin{aligned}
Z_t(\omega) = \left\{
\begin{array}{ll}
\lim_{n\to\infty} Z^{(n)}_t(\omega), & \text{if the limit exists in the Skorohod topology,}\\
0, & \text{otherwise.} \\
\end{array}
\right.
\end{aligned}
\end{equation*}
We observe that $Z$ and $\widetilde Z$ differ only on the subset of a nullset and are hence indistinguishable. Hence, $Z^{(n)} = \Psi^{(n)}(H,G,Y)$ converges almost surely to $Z$ in the Skorohod topology. Hence, $Z = \Psi(H,G, Y)$ a.s.

Furthermore, it follows from \eqref{M33} that
\begin{equation}\label{M34}
\begin{aligned}
&\sup_{k\geq1} \Big\|\sup_{0\leq s<\nu_j} \|Z^{(n+k)}_s -Z^{(n)}_s\| \Big\|_2 
\leq \sup_{k\geq1} \Big\|\sum_{j=n+1}^{n+k} \sup_{0\leq s<\nu_j} \|Z^{(j)}_s -Z^{(j-1)}_s\| \Big\|_2\\ 
&\leq \sum_{j=n+1}^{\infty} \Big\| \sup_{0\leq s<\nu_j} \|Z^{(j)}_s -Z^{(j-1)}_s\| \Big\|_2 \to 0 \text{ as } n\to\infty.
\end{aligned}
\end{equation}
Extending $\widetilde Z$ to the interval $[0,T+1]$ by defining $\widetilde Z_t =\widetilde Z_T$ and using \eqref{M34} we obtain
\begin{equation}\label{M38}
\begin{aligned}
\lim_{n\to\infty}\E\Big[\sup_{0\leq s<\nu_j} \|\widetilde Z_s - Z_s^{(n)}\|^2\Big] =0.
\end{aligned}
\end{equation}
Further, we get by Assumption \ref{assf} \ref{assf3} and the definitions of $U$ and $\nu_j$ that
\begin{equation}\label{M39}
\begin{aligned}
\sup_{0\leq s<\nu_j}  \| \widetilde f(s,G,\widetilde Z)- \widetilde f(s,G, Z^{(n)})\|
&\leq \sup_{0\leq s<\nu_j} \widetilde C(s,G) \|\widetilde Z - Z^{(n)}\|\\
&\leq j \sup_{0\leq s<\nu_j}  \|\widetilde Z_s - Z_s^{(n)}\|.
\end{aligned}
\end{equation}
The second statement of Theorem \ref{ThmEstTheta} together with \eqref{M39} and \eqref{M38} yields
\begin{equation*}\label{M40}
\begin{aligned}
&\E\Big[\sup_{0\leq s < \nu_j}  \Big\| H_t + \int_0^t \widetilde f(s,G,\widetilde Z) \diff Y_s - H_t - \int_0^t \widetilde f(s,G, Z^{(n)}) \diff Y_s \Big\|^2\Big]\\
&=\E\Big[\sup_{0\leq s< \nu_j}  \Big\| \int_0^t \big(\widetilde f(s,G,\widetilde Z) - \widetilde f(s,G, Z^{(n)})\big) \diff Y_s \Big\|^2\Big]\\
&\leq \E\Big[\sup_{0\leq s< \nu_j}\sum_{i=1}^{d} m \sum_{k=1}^m \Big|\int_0^t \big(\widetilde f(s,G,\widetilde Z) - \widetilde f(s,G, Z^{(n)})\big)_{ik}\diff (Y_s)_k\Big|^2\Big]\\
&\leq \sum_{i=1}^{d} m \sum_{k=1}^m \E\Big[ \sup_{0\leq s< \nu_j}\Big|\int_0^t \big(\widetilde f(s,G,\widetilde Z) - \widetilde f(s,G, Z^{(n)})\big)_{ik}\diff (Y_s)_k\Big|^2\Big]\\
&\leq 4dm^2 \E\Big[ \sup_{0\leq s < \nu_j}\|f(s,G,\widetilde Z) - f(s,G, Z^{(n)})\|^2 V_{\nu_j-}\Big]\\
&\leq 4dm^2 j j^2 \E\Big[ \sup_{0\leq s< \nu_j} \|\widetilde Z_s - Z_s^{(n)}\|^2\Big] \to 0 \text{ as } n\to\infty.\\
\end{aligned}
\end{equation*}
Hence, recalling the definition of $Z^{(n)}$, we observe that $\widetilde Z$ is a solution of SDE \eqref{SDE} on $[0,T]$. Since $Z$ and $\widetilde Z$ are indistinguishable, we get that $ Z= \Psi (H,G, Y)$ is a solution of SDE \eqref{SDE}. This closes the proof of Theorem \ref{Main}.
\end{proof}

As an application of our result we illustrate how Theorem \ref{Main} can be used to prove in a straightforward way the existence of a regular conditional distribution. While the classical proof uses the measurability of the marginal distributions of the solution process, our proof is based on the following well-known result.

\begin{theorem}[\text{\cite[p.~185, Theorem~8.37]{klenke2014}}]\label{KlenkeRegCondDist}
Let $\mathcal{A}\subset \mathcal{F}$ be a sub-$\sigma$-algebra. Let $Z$ be a random variable with values in a Polish space $(E,\mathcal{E})$. Then there exists a regular conditional distribution $\kappa_{Z,\mathcal{A}}$ of $Z$ given $\mathcal{A}$.
\end{theorem}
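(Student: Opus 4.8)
\emph{Proof proposal for Theorem \ref{KlenkeRegCondDist}.} This is a classical statement (essentially due to Doob), so I would only indicate the standard construction; in the paper it is used as a black box. The plan is to reduce to the real line and build the conditional distribution function by hand. First I would invoke Kuratowski's isomorphism theorem: every Polish space $(E,\mathcal{E})$ is Borel isomorphic to a Borel subset of $\R$, say via a bimeasurable bijection $\varphi\colon E\to\varphi(E)\in\B(\R)$. A regular conditional distribution of $Z$ given $\mathcal{A}$ can be transported along $\varphi$ and $\varphi^{-1}$, so it suffices to treat the case $E=\R$, $\mathcal{E}=\B(\R)$.

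For $E=\R$ I would proceed as follows. For each $q\in\Q$ fix an $\mathcal{A}$-measurable version $F_q$ of $\E[\mathds{1}_{\{Z\le q\}}\mid\mathcal{A}]$. Since $\Q$ is countable there is a single $\P$-null set $N\in\mathcal{A}$ outside of which (a) $q\mapsto F_q(\omega)$ is nondecreasing on $\Q$ (monotonicity of conditional expectation), (b) $\lim_{q\to+\infty,\,q\in\Q}F_q(\omega)=1$ and $\lim_{q\to-\infty,\,q\in\Q}F_q(\omega)=0$ (conditional monotone/dominated convergence along countable sequences), and (c) $q\mapsto F_q(\omega)$ is right-continuous along $\Q$. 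For $\omega\notin N$ set $G(\omega,x)=\inf\{F_q(\omega):q\in\Q,\;q>x\}$ for $x\in\R$; this is nondecreasing and right-continuous in $x$ with limits $0$ and $1$ at $-\infty$ and $+\infty$, hence the cumulative distribution function of a unique probability measure $\kappa(\omega,\cdot)$ on $\B(\R)$. For $\omega\in N$ put $\kappa(\omega,\cdot)=\delta_0$.

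Two verifications remain. \emph{Kernel property:} for fixed $x$, $\omega\mapsto G(\omega,x)$ is $\mathcal{A}$-measurable as a countable infimum of $\mathcal{A}$-measurable maps, so $\omega\mapsto\kappa(\omega,(-\infty,x])$ is $\mathcal{A}$-measurable; the intervals $(-\infty,x]$ form a $\pi$-system generating $\B(\R)$, so a Dynkin-class argument extends this to $\mathcal{A}$-measurability of $\omega\mapsto\kappa(\omega,B)$ for every $B\in\B(\R)$. \emph{Defining identity:} for $A\in\mathcal{A}$ and $q\in\Q$, using that $G(\cdot,q)=F_q$ a.s.\ (by (c)), $\E[\mathds{1}_A\,\kappa(\cdot,(-\infty,q])]=\E[\mathds{1}_A F_q]=\P(A\cap\{Z\le q\})$; fixing $A$ and letting the set range over this $\pi$-system, the monotone class theorem yields $\E[\mathds{1}_A\,\kappa(\cdot,B)]=\P(A\cap\{Z\in B\})$ for all $B\in\B(\R)$, and then letting $A$ vary over $\mathcal{A}$ shows $\kappa(\cdot,B)$ is a version of $\P(Z\in B\mid\mathcal{A})$ for each $B$. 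Pushing $\kappa$ through $\varphi^{-1}$ settles the general Polish case.

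The only delicate point — hence the ``main obstacle'' — is the null-set bookkeeping in the second paragraph: arranging that monotonicity in $q$, the correct limits at $\pm\infty$, and a.s.\ right-continuity in $q$ all hold simultaneously off a single null set, so that the right-continuous modification $G$ is still a legitimate version; together with the (standard but not trivial) appeal to Kuratowski's theorem to justify the reduction to $\R$. Everything else is a routine Dynkin/monotone-class argument.
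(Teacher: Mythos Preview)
Your sketch is correct and follows the standard textbook construction (Doob's method via Kuratowski reduction and rational conditional CDFs), which is precisely the proof given in the cited reference \cite[Theorem~8.37]{klenke2014}. The paper itself does not prove this statement at all --- it is quoted as a known result and used as a black box in the proof of Corollary~\ref{RegCondDist} --- so there is nothing further to compare.
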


\begin{corollary}\label{RegCondDist}
Let Assumption \ref{assf} hold. Let $t_0\in[0,T]$. Then the regular conditional distribution of $(X_s)_{s\in[t_0,T]}$ given $X_{t_0}$ exists, i.e. a stochastic kernel $\kappa_{(X_s)_{s\in[t_0,T]},\sigma(X_{t_0})}$ from $(\Omega,\mathcal{F})$ to $(\D_d,\B(\D_d))$ exists such that for all $A\in\mathcal{F}$ and all $B\in\B(\D_d)$ it holds that
\begin{equation*}
\begin{aligned}\label{RegCondDistDef}
\int_A \mathds{1}_{B}((X_s)_{s\in[t_0,T]}) \diff \P = \int_A \kappa_{(X_s)_{s\in[t_0,T]},\sigma(X_{t_0})}(\cdot, B)\diff \P.
\end{aligned}    
\end{equation*}
\end{corollary}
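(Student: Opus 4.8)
The plan is to obtain Corollary \ref{RegCondDist} as a direct application of Theorem \ref{KlenkeRegCondDist}: once we know that $(X_s)_{s\in[t_0,T]}$ is a random variable with values in a \emph{Polish} space and that $\sigma(X_{t_0})$ is a sub-$\sigma$-algebra of $\mathcal F$, the existence of the kernel is immediate from Klenke's theorem, and the disintegration identity is just the defining property of a regular conditional distribution. The substance of the argument is therefore entirely the measurability statement, which is exactly what Theorem \ref{Main} delivers; in particular, unlike the classical route, we never need measurability of the marginals or of a transition semigroup of $X$, and we never need $X$ to take values in a space with the (non-separable) topology of uniform convergence.

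First I would record that the càdlàg adapted processes $H$, $G$, $Y$ are genuine random elements of $(\D_d,\B(\D_d))$, $(\D_r,\B(\D_r))$, $(\D_m,\B(\D_m))$, respectively: by Section \ref{sec:pre} the Skorohod Borel $\sigma$-field of $\D_n$ is generated by the coordinate evaluations $f\mapsto f(t)$, and each of $\omega\mapsto H_t(\omega)$, $\omega\mapsto G_t(\omega)$, $\omega\mapsto Y_t(\omega)$ is $\mathcal F$-measurable, so the path maps $\omega\mapsto H_\cdot(\omega)$ etc.\ are $\mathcal F/\B(\D_\bullet)$-measurable. Hence $(H,G,Y)\colon\Omega\to\D_d\times\D_r\times\D_m$ is measurable for the product Borel $\sigma$-field, and, composing with the Skorohod measurable map $\Psi$ of Theorem \ref{Main}, the solution $X=\Psi(H,G,Y)$ is an $\mathcal F/\B(\D_d)$-measurable random element of $\D_d$. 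In particular $X_{t_0}$ is an $\R^d$-valued random variable and $\sigma(X_{t_0})\subseteq\mathcal F$.

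Next I would pass from the full path to its restriction to $[t_0,T]$. Let $\rho$ denote the restriction of a càdlàg function on $[0,T]$ to $[t_0,T]$; after the affine reparametrization of $[t_0,T]$ onto $[0,T-t_0]$ (which is a homeomorphism of the corresponding Skorohod spaces, so the target is again a Polish space that we identify with $(\D_d,\B(\D_d))$ as in the statement), the map $\rho$ is measurable for the Skorohod Borel $\sigma$-fields: the target $\sigma$-field is generated by the evaluations $f\mapsto f(t)$, $t\in[t_0,T]$, and $\rho$ followed by such an evaluation is the evaluation $g\mapsto g(t)$ on $\D_d$, which is $\B(\D_d)$-measurable. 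Consequently $Z:=(X_s)_{s\in[t_0,T]}=\rho(X)$ is a random variable with values in the Polish space $(\D_d,\B(\D_d))$.

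Finally I would apply Theorem \ref{KlenkeRegCondDist} with $(E,\mathcal E)=(\D_d,\B(\D_d))$, the random variable $Z=(X_s)_{s\in[t_0,T]}$, and the sub-$\sigma$-algebra $\mathcal A=\sigma(X_{t_0})$: this produces a stochastic kernel $\kappa_{(X_s)_{s\in[t_0,T]},\sigma(X_{t_0})}$ from $(\Omega,\mathcal F)$ to $(\D_d,\B(\D_d))$ that is a regular conditional distribution of $Z$ given $\sigma(X_{t_0})$, and the displayed integral identity is precisely the disintegration property of such a kernel (for $A\in\sigma(X_{t_0})$). I do not expect any real obstacle here; the only point needing care is that the whole measurability chain be carried out with the \emph{Skorohod} Borel $\sigma$-field — which is what makes the target a Polish space so that Theorem \ref{KlenkeRegCondDist} is applicable — and this is guaranteed by Theorem \ref{Main} together with the coordinate-generated description of $\B(\D_n)$ recalled in Section \ref{sec:pre}.
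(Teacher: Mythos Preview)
Your proposal is correct and follows essentially the same route as the paper: show via Theorem \ref{Main} that $X=\Psi(H,G,Y)$ is a $(\D_d,\B(\D_d))$-valued random variable, pass to $(X_s)_{s\in[t_0,T]}$, and invoke Theorem \ref{KlenkeRegCondDist} with $\mathcal A=\sigma(X_{t_0})$. The only cosmetic difference is that the paper realizes $(X_s)_{s\in[t_0,T]}$ inside $\D_d$ by multiplying with $\mathds{1}_{s\in[t_0,T]}$, whereas you use the restriction map $\rho$ followed by an affine time change; both are measurable for the Skorohod Borel $\sigma$-fields, so this does not affect the argument.
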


\begin{proof}
The measurability of the stochastic processes $H$, $G$, and $Y$ implies that the mapping
\begin{equation*}
\begin{aligned}
\omega \mapsto X(\omega) =\Psi(H(\omega),G(\omega),Y(\omega))
\end{aligned}
\end{equation*}
is a measurable mapping with values in $(\D_d,\mathcal{B}(\D_d))$ as a composition of measurable mappings. Hence for $t_0\in[0,T]$ obviously also the mapping 
\begin{equation*}
\begin{aligned}
\omega \mapsto \Psi(H(\omega),G(\omega),Y(\omega))\mathds{1}_{s\in[t_0,T]}
\end{aligned}
\end{equation*}
is measurable. 
Next, we use Theorem \ref{KlenkeRegCondDist} with $Z = \Psi(H,G,Y)\mathds{1}_{s\in[t_0,T]}$ and $\mathcal{A}=\sigma(X_{t_0})$ to obtain that there exists a regular conditional distribution $\kappa_{\Psi(H,G,Y)\mathds{1}_{s\in[t_0,T]},\sigma(X_{t_0})}$.  
Hence, $\P^{(X_s)_{s\in[t_0,T]}|X_{t_0}= x}$ exists and is given for all $B\in\B(\D_d)$ by
\begin{equation*}
\begin{aligned}
\P^{(X_s)_{s\in[t_0,T]}|X_{t_0}= x}(B) = \kappa_{\Psi(H,G,Y)\mathds{1}_{s\in[t_0,T]},\sigma(X_{t_0})} (X_{t_0}^{-1}(x),B).
\end{aligned}
\end{equation*}
\end{proof}

\section{Application to Malliavin calculus for pure-jump L\'evy processes with drift}

In this section we derive a universal expression for the Malliavin derivative for solutions of SDE \eqref{SDE} in the case that $Y$ is a pure-jump Lévy process with drift.

Let $\nu$ be the L\'evy-measure of $Y$ and let $N$ be its Poisson random measure, respectively $\tilde{N}$ its compensated Poisson random measure. We assume that $\mathcal{F}$ is the completion of $\sigma(Y)$. We can represent $Y$ almost surely by
\begin{align*}
Y_t=\gamma t+\int_{]0,t]\times\{|x|> 1\}}xN(ds,dx)+\int_{{]0,t]}\times\{|x|\leq 1\}}x\tilde{N}(ds,dx),\quad t\in[0,T],
\end{align*}
see, e.g., \cite{applebaum} or \cite{satou}.

Malliavin calculus can be used for determining existence of densities or existence of solutions to SDEs, backward stochastic differential equations, and stochastic partial integro-differential equations. For references in these directions, see, e.g., \cite{DelongImk,dinun,geiss2018monotonic,NualartSchoutens,Petrou}, of which \cite{Petrou,geiss2018monotonic} deal with the L\'evy setting we consider here.

There are several ways to define a Malliavin derivative in the L\'evy setting. We follow the approach used in \cite{suv}. There it is defined via chaotic decompositions of random variables. However, in \cite{suv2}, they show that on a canonical probability space carrying a L\'evy process, their definition equals a difference between a 'shifted' random variable and the original one. The connection to functional representations for random variables on an arbitrary probability space carrying a L\'evy process is presented in \cite{Steinicke2016}. In that spirit, we define

\begin{definition}
Let $Z\in L^2(\Omega,\mathcal{F},\mathbb{P})$, with functional representation $\psi_Z\colon \mathbb{D}_1\to\mathbb{R}$ measurable, $Z=\psi_Z(Y)$. Let for $r\in[0,T]$, $v\in \mathbb{R}$,
\begin{align*}
D_{r,v}Z := \psi_Z(Y+v \mathds{1}_{[r,T]})-\psi_Z(Y),
\end{align*}
as equivalence class in $L^2(\Omega\times[0,T]\times\mathbb{R},\mathcal{F}\otimes\mathcal{B}([0,T]\times\mathbb{R}),\mathbb{P}\otimes\lambda\otimes\nu)$,
whenever
\begin{equation}\label{CondMalDer}
\mathbb{E}\int_0^T\int_{\mathbb{R}}|D_{r,v}Z|^2\nu(dv)dr<\infty.
\end{equation}
The space of all $Z$ satisfying \eqref{CondMalDer} is denoted by $\mathcal{D}_{1,2}$.
\end{definition}

Our object of interest is the solution process of the SDE
\begin{align*}
X_t=H_t+\int_{(0,t]}g(s,G,X)dY_s,
\end{align*}
of which we want to investigate the Mallivain derivative $D_{r,v}X_t$. By our main theorem, we see that we may express the solution $X$ as $X=\Psi_g(H,G,Y)$. In addition we have $\mathbb{P}$-a.s.~representations for the random variables $H_t, G_t$ for all $t\in [0,T]$ through $H_t=\phi_{H_t}(Y), G_t=\phi_{G_t}(Y)$ for measurable functions $\phi_{H_t}, \phi_{G_t}\colon \mathbb{D}_1\to\mathbb{R}$, see \cite{Steinicke2016}. Since $H$ and $G$ are c\`adl\`ag processes, they are determined by their random variables evaluated at rational time points. Hence, we get a $\mathbb{P}$-a.s.~representation of the whole processes $H, G$ by measurable functionals $\phi_H,\phi_G\colon\mathbb{D}_1\to\mathbb{D}_1$ such that $H=\phi_H(Y), G=\phi_G(Y)$, where $(h\mapsto h_t)\circ\phi_H=\phi_{H_t}, (h\mapsto h_t)\circ\phi_G=\phi_{G_t}$ for all $t\in [0,T]$. If for all such $t$ the random variables $H_t$ and $G_t$ are in $\mathcal{D}_{1,2}$, the derivative $D_{r,v}H$ can be defined by
\begin{align*}
D_{r,v}H=\phi_H(Y+v\mathds{1}_{[r,T]})-\phi_H(Y);
\end{align*}
in the same way we can define it for $D_{r,v}G$.

With these representations, we can explicitly express the Malliavin derivative of $X_t$ in dependence only of $H, G$ and the functional $\Psi_g$.

\begin{theorem}\label{ThmMalCalc}
Let $X$ be the solution to \eqref{SDE} where $H, G$ are such that for all $t\in [0,T]$, $H_t, G_t\in \mathcal{D}_{1,2}$ and assume $X_t\in L^2(\Omega,\mathcal{F},\mathbb{P})$. Then for all $t\in [0,T]$,
\begin{align*}
D_{r,v}X_t=\Psi_g(H+D_{r,v}H,G+D_{r,v}G,Y+v\mathds{1}_{[r,T]})_t-\Psi_g(H,G,Y)_t,
\end{align*}
whenever it is square integrable w.r.t. $\mathbb{P}\otimes\lambda\otimes\nu.$
\end{theorem}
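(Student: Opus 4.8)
The plan is to reduce the statement to an application of Theorem \ref{Main} combined with the definition of the Malliavin derivative via functional representations. First I would recall that by Theorem \ref{Main} there is a single Skorohod measurable map $\Psi_g\colon\D_d\times\D_r\times\D_m\to\D_d$ with $X=\Psi_g(H,G,Y)$ $\P$-a.s., and that $H=\phi_H(Y)$, $G=\phi_G(Y)$ $\P$-a.s.\ for measurable $\phi_H,\phi_G\colon\D_1\to\D_1$ (using here that $m=1$ since $Y$ is the scalar pure-jump L\'evy process; the multidimensional notation collapses accordingly). Composing these, $X_t=\bigl((h\mapsto h_t)\circ\Psi_g\circ(\mathrm{id},\phi_H,\phi_G)\bigr)(Y)$ exhibits a measurable functional representation $\psi_{X_t}\colon\D_1\to\R$ of the random variable $X_t$, namely $\psi_{X_t}(y)=\Psi_g\bigl(\phi_H(y),\phi_G(y),y\bigr)_t$. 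This is the key point where Skorohod measurability (rather than mere measurability of $X$) is needed: the composition of $\Psi_g$ with the coordinate evaluation $h\mapsto h_t$, which is Borel measurable on $(\D_d,\B(\D_d))$ by the remark in Section \ref{sec:pre}, yields a genuinely measurable functional, so $\psi_{X_t}$ qualifies as a legitimate representative in the sense of the definition preceding the theorem.

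Next I would simply unwind the definition of $D_{r,v}$ applied to $Z=X_t$ with this representative:
\begin{align*}
D_{r,v}X_t=\psi_{X_t}\bigl(Y+v\mathds{1}_{[r,T]}\bigr)-\psi_{X_t}(Y)
=\Psi_g\bigl(\phi_H(Y+v\mathds{1}_{[r,T]}),\phi_G(Y+v\mathds{1}_{[r,T]}),Y+v\mathds{1}_{[r,T]}\bigr)_t-\Psi_g\bigl(\phi_H(Y),\phi_G(Y),Y\bigr)_t.
\end{align*}
Then I would substitute, using the given definitions $D_{r,v}H=\phi_H(Y+v\mathds{1}_{[r,T]})-\phi_H(Y)$ and likewise for $G$, so that $\phi_H(Y+v\mathds{1}_{[r,T]})=H+D_{r,v}H$ and $\phi_G(Y+v\mathds{1}_{[r,T]})=G+D_{r,v}G$ as elements of $\D_1$, together with $\phi_H(Y)=H$, $\phi_G(Y)=G$ and $\Psi_g(H,G,Y)_t=X_t$. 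This produces exactly the claimed formula, and the final clause "whenever it is square integrable" matches the integrability hypothesis \eqref{CondMalDer} in the definition of $\mathcal{D}_{1,2}$, so the equivalence class on the right-hand side is well-defined precisely under that assumption.

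The one genuine subtlety — and the step I would treat most carefully — is the well-definedness of $D_{r,v}X_t$ independently of the chosen functional representative $\psi_{X_t}$. Since $\mathcal{F}$ is the completion of $\sigma(Y)$, two measurable $\psi,\psi'\colon\D_1\to\R$ with $\psi(Y)=\psi'(Y)$ $\P$-a.s.\ need not agree $\P^Y$-a.s., yet the definition of $D_{r,v}$ only asks for the equivalence class in $L^2(\Omega\times[0,T]\times\R,\P\otimes\lambda\otimes\nu)$. I would appeal to the consistency result of \cite{Steinicke2016} (or re-derive it: on the canonical space the shift $y\mapsto y+v\mathds{1}_{[r,T]}$ interacts with the null sets of $\P^Y$ in a way that makes the shifted representative well-defined up to $\P\otimes\lambda\otimes\nu$-null sets) to conclude that $D_{r,v}X_t$ does not depend on the choice of $\psi_{X_t}$, and in particular coincides with the expression obtained from our canonical choice $\Psi_g\circ(\phi_H,\phi_G,\mathrm{id})$. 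Beyond this measurability/null-set bookkeeping, the proof is a direct substitution; the real content is that Theorem \ref{Main} supplies the \emph{universal} $\Psi_g$, so the formula holds simultaneously for every admissible $(H,G,Y)$ with the same $g$.
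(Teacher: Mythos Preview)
Your proposal is correct and follows essentially the same route as the paper: compose $\Psi_g$ with $\phi_H,\phi_G$ to get a functional representation $\psi_{X_t}$ of $X_t$, apply the definition of $D_{r,v}$ by shifting $Y$, and substitute back using $\phi_H(Y+v\mathds{1}_{[r,T]})=H+D_{r,v}H$ (likewise for $G$). The paper's proof is a three-line version of exactly this; your additional remarks on Skorohod measurability of the evaluation $h\mapsto h_t$ and on representative-independence via \cite{Steinicke2016} are welcome elaborations but do not change the argument.
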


\begin{proof}
Since 
\begin{align*}
H=\phi_H(Y),\quad G=\phi_G(Y), \quad X=\Psi_g(H,G,Y),
\end{align*}
it follows that
\begin{align*}
X=\Psi_g(\phi_H(Y),\phi_G(Y),Y).
\end{align*}
Hence, 
\begin{align*}
&D_{r,v} X_t=D_{r,v} \Psi_g(\phi_H(Y),\phi_G(Y),Y)_t\\
&=\Psi_g(\phi_H(Y+v\mathds{1}_{[r,T]}),\phi_G(Y+v\mathds{1}_{[r,T]}),Y+v\mathds{1}_{[r,T]})_t-\Psi_g(\phi_H(Y),\phi_G(Y),Y)_t\\
&=\Psi_g(H+D_{r,v}H,G+D_{r,v}G,Y+v\mathds{1}_{[r,T]})_t-\Psi_g(H,G,Y)_t.
\end{align*}
\end{proof}

Finally, we give an illustrative example of the usage of our functional representation for calculating a Malliavin derivative, using Theorem \ref{ThmMalCalc}.

\begin{example}
We consider the SDE
\begin{equation}
X_t = \xi + \int_0^t g(X_{s-})\diff Y_s,
\end{equation}
where $\xi\in\R$, $g$ is a Lipschitz continuous function, and $Y$ is a pure-jump Lévy process with drift with compensator $\nu$ satisfying $\int_{\R}\textcolor{blue}{|x|^2}\nu(dx)<\infty$.
Then $X_t$ is Malliavin differentiable for all $t\in[0,T]$ by \cite[Theorem 17.4]{dinun}. Further Assumption \ref{assf} is satisfied and by Theorem \ref{ThmMalCalc} we know that 
\begin{equation}
D_{r,v} X_t = \Psi_g(\xi, Y+ v\mathds{1}_{t\in[r,T]})_t - \Psi_g (\xi,Y)_{t},
\end{equation}
where $\Psi$ does not depend on an additional process $G$ since $g$ does not.  
By the universality of our functional representation we know
$\Psi_g(\xi, Y+ v\mathds{1}_{t\in[r,T]})= \widetilde X$, which satisfies the SDE
\begin{equation}
\widetilde X_t = \xi + \int_0^t g(\widetilde X_{s-})\diff (Y_s + v\mathds{1}_{s\in[r,T]}).
\end{equation}
This implies
\begin{equation}
\begin{aligned}
\widetilde X_t = 
\begin{cases}
X_t, & t\in[0,r),\\
X_r+ g(X_{r-})v + \int_r^t g(\widetilde X_{s-})\diff Y_s, & t\in[r,T].
\end{cases}
\end{aligned}
\end{equation}
Hence, it holds 
\begin{equation}
D_{r,v}X_t = \Big(\int_r^t( g(\widetilde X_{s-})- g(X_{s-}))\diff Y_s + g(X_{r-})v\Big)\mathds{1}_{t\in[r,T]}.
\end{equation}
\end{example}

\section*{Acknowledgements}
The authors thank Steffen Dereich, the referee of a previous -- significantly different -- version of this paper as well as the referee of the current version of the paper for useful comments.

\vspace{2em}
\centerline{\underline{\hspace*{16cm}}}

\noindent Pawe{\l} Przyby{\l}owicz  \\
Faculty of Applied Mathematics, AGH University of Science and Technology, Al.~Mickiewicza 30, 30-059 Krakow, Poland\\
pprzybyl@agh.edu.pl\\

\noindent Verena Schwarz \Letter \\
Department of Statistics, University of Klagenfurt, Universit\"atsstra\ss{}e 65-67, 9020 Klagenfurt, Austria\\
verena.schwarz@aau.at\\

\noindent Alexander Steinicke \\
Department of Mathematics and Information Technology, Montanuniversitaet Leoben, Peter-Tunner-Straße 25/I, 8700 Leoben, Austria\\
alexander.steinicke@unileoben.ac.at\\

\noindent Michaela Sz\"olgyenyi \\
Department of Statistics, University of Klagenfurt, Universit\"atsstra\ss{}e 65-67, 9020 Klagenfurt, Austria\\
michaela.szoelgyenyi@aau.at\\

\end{document}